\numberwithin{equation}{section}
\renewcommand{\aa}{\overset{{}_{{}_\circ}}{\mathrm{a}}}
\renewcommand{\a}{\alpha}
\newcommand{\g}{\gamma}
\newcommand{\G}{\Gamma}
\renewcommand{\d}{\delta}
\newcommand{\D}{\Delta}
\newcommand{\z}{\zeta}
\renewcommand{\l}{\lambda}
\renewcommand{\L}{\Lambda}
\newcommand{\m}{\mu}
\newcommand{\n}{\nu}
\newcommand{\x}{\xi}
\newcommand{\s}{\sigma}
\newcommand{\vs}{\varsigma}
\renewcommand{\t}{\tau}
\newcommand{\f}{\phi}
\renewcommand{\O}{\Omega}
\newcommand{\C}{{\mathbb C}}
\newcommand{\R}{{\mathbb R}}
\newcommand{\T}{{\mathbb{T}}}
\newcommand{\bb}{{\mathbf b}}
\newcommand{\cb}{{\mathbf c}}
\newcommand{\kb}{{\mathbf k}}
\newcommand{\rb}{{\mathbf r}}
\newcommand{\Gb}{{\mathbf G}}
\newcommand{\Ib}{{\mathbf I}}
\newcommand{\Kb}{{\mathbf K}}
\newcommand{\Mb}{{\mathbf M}}
\newcommand{\Sbb}{{\mathbf S}}
\newcommand{\aF}{\mathfrak a}
\newcommand{\SF}{\mathfrak S}
\newcommand{\Ac}{{\mathcal A}}
\newcommand{\Bc}{{\mathcal B}}
\newcommand{\Cc}{{\mathcal C}}
\newcommand{\Kc}{{\mathcal K}}
\newcommand{\Lc}{{\mathcal L}}
\newcommand{\Rc}{{\mathcal R}}
\newcommand{\diag}{{\rm diag}\,}
\newcommand{\grad}{{\rm grad}\,}
\renewcommand{\div}{{\rm div}\,}
\newcommand{\pO}{\partial\Omega}
\newcommand{\Ran}{\hbox{{\rm Ran}}\,}
\newcommand{\pp}{\pmb{\partial}}
\newcommand{\bG}{\pmb{\G}}
\newtheorem{theorem}{Theorem}[section]
\newtheorem*{theorem*}{Theorem}
\theoremstyle{definition}
\newtheorem{definition}[theorem]{Definition}
\theoremstyle{remark}
\newtheorem{remark}[theorem]{Remark}
\date{}
\begin{document}

\title[Neumann-Poincar\'e operator]{Spectral properties of  the Neumann-Poincar\'e operator in 3D elasticity}
\author{Yoshihisa Miyanishi}
\address{Center for Mathematical Modeling and Data Science, Osaka University, Japan}
\email{miyanishi@sigmath.es.osaka-u.ac.jp}

\author{Grigori Rozenblum }

\address{Chalmers University of Technology and The University of Gothenburg, Sweden; St.Petersburg State University Dept. Math. Physics, St.Petersburg, Russia.}

\email{grigori@chalmers.se}

\subjclass[2010]{47A75 (primary), 58J50 (secondary)}
\keywords{Neumann-Poincar\'e operator, Eigenvalues, Spectrum, Lam\'e equations, Pseudo-differential operators }

\begin{abstract}
We consider the double layer potential (Neumann-Poincar\'e) operator
appearing in 3-dimensional elasticity. We show that the recent result about the polynomial compactness of this operator for the case of a homogeneous media follows without additional calculations from previous considerations by Agranovich et.al., based upon pseudodifferential operators. Further on, we define the NP operator for the case of a nonhomogeneous isotropic media and show that its properties depend crucially  on the character of non-homogeneity. If the Lam\'e parameters are constant along the boundary, the NP operator is still polynomially compact. On the other hand, if these parameters are not constant, two or more intervals of continuous spectrum may appear, so the NP operator ceases to be polynomially compact. However, after a certain modification, it becomes polynomially compact again. Finally, we evaluate the rate of convergence of discrete eigenvalues of the NP operator to the tips of the essential spectrum.
\end{abstract}
\maketitle
\section{Introduction}
This article was initially inspired by the recent paper \cite{3D} in this journal, where the polynomial compactness property for the Neumann-Poincar\'e (NP) operator for the 3D elasticity problem was established and (all) three points of its essential spectrum were found. The above paper used rather lengthy calculations in order to represent the NP operator as a pseudodifferential one, up to a weaker term. It turns out that one might have arrived at the main results of \cite{3D} in a much more simple way, using certain explicit  formulas derived quite long ago by M. Agranovich et.al. in \cite{AgrLame}. Further on, the papers above dealt with the case of an elastic body composed  of a homogeneous isotropic material. However we can show now that the machinery of pseudodifferential operators and the corresponding layer potentials as applied to strongly elliptic systems, developed in \cite{McLean} and further on by M. Agranovich in \cite{Agrgen}, enables one to handle the case of a  nonhomogeneous isotropic  material as well. Of course, the notion of the NP operator should be generalized to this case in a proper way, since the fundamental solution for the Lam\'e system cannot be expressed explicitly any more. The proper generalization is also based upon the constructions in \cite{McLean}, \cite{Agrgen}.  When studying the spectral structure of the resulting NP operator, we find out that  the polynomial compactness property, with the same three points of the essential spectrum, is still valid, as long as the Lam\'e parameters are constant on the boundary. If these parameters are only locally constant on the boundary (thus they may be different on different connected components of the boundary), there may exist an odd number of points of the essential spectrum, no more than twice the number of components of the boundary plus one. The NP operator is still polynomially compact but the expression for the polynomial  depends now on the topology of the boundary.

 A completely different picture arises when the Lam\'e parameters are not locally constant on the boundary. In this case the NP operator is not polynomially compact any more. Its essential spectrum consists of the point zero, an even number of intervals of the continuous spectrum, and, probably, some more, an even number, of isolated points.
Additionally, there may exist some eigenvalues of finite multiplicity, finitely or infinitely many of them. We can at the moment say nothing  about eigenvalues embedded into the essential spectrum. As for the eigenvalues outside the essential spectrum,  necessarily  converging only to the tips of the latter, we evaluate the rate of this convergence.  Of more use for applications might turn out to be the notion of the \emph{modified} NP operator, introduced here, which \emph{is} polynomially compact, again with three points of the essential spectrum, even for variable Lam\'e parameters.

In order to avoid technical complications, we assume that the boundary is smooth and that the Lam\'e parameters  are infinitely smooth up to the boundary.

In Sect.2 we recall the Lam\'e equations for the homogeneous and non-homogeneous media and discuss the notion of the NP operator, as well as demonstrate that in the homogeneous case the results of \cite{3D} follow easily from the considerations in \cite{AgrLame}. In Sect 3 we consider the nonhomogeneous case, introduce the NP operator and discuss its  pseudodifferential representation; then we recall the properties of the essential spectrum of zero order PsDO and prove our main theorems. The modified NP operator is considered here as well. The next section is devoted to the study of eigenvalues of the NP operator, where we show how the rate of their convergence to the tips of the essential spectrum depends on the behavior of the Lam\'e parameters near critical points of the boundary.  A discussion of related results is placed in the last section.

The recent renewal  of interest in the spectral properties of NP operators in different settings has arisen in relation to the plasmon resonance, see, e.g., \cite{IOP}, \cite{Ammari}, \cite{LiLiu}  and voluminous literature cited there. We  discuss briefly the consequences of our spectral results for this latter topic.
In the paper we consider the 3-dimensional case in detail. As it concerns the essential spectrum, the two-dimensional case does not require any additional machinery and can be treated in a similar way. However the behavior of eigenvalues is rather different. We discuss   some recent results in this direction.
 \section{The Lam\'e equations and the Lam\'e operator}\label{SecLameConst}
 \subsection{Lam\'e equations}
 The elastostatic Lam\'e equations in a domain $\O\subset\R^d$, $d=2,3,$ with infinitely smooth boundary $\pO$ for an isotropic body $\O$, is the system of $d$ equations having the form
\begin{equation}\label{Lame}
    \Lc u\equiv \Lc_{\m,\l}=\div(\m \grad u)+\grad((\l+\m)\div u)=0, x=(x_1,\dots, x_d)\in\O, u=(u_1,\dots,u_d)^\top,
\end{equation}
where $\l,\m$ are called Lam\'e parameters (see, e.g., \cite{KuprPot}, \cite{Kupr79}, \cite{Maugin}). If the functions $\l(x),\m(x)$ are, in fact, some constants, the body is called homogeneous, and it is nonhomogeneous otherwise. The homogeneous case is, of course, considerably better studied, since the fundamental solution is known explicitly - it is called the Kelvin matrix $\G(x)=\G_{\l,\m}(x)$. We do not need its explicit expression, so we do not present it here; one can find it, e.g., in \cite{AgrLame} or \cite{IOP}.

The Lam\'e parameters $\l,\m$ are always supposed to satisfy the so called strong convexity conditions
\begin{equation}\label{Convexity}
    \mu>0, d\l+2\m>0.
\end{equation}

In the nonhomogeneous case, we suppose that the (now) functions $\l(x),\m(x)$ are infinitely differentiable in $\O$, up to the boundary, and that conditions \eqref{Convexity} are satisfied uniformly, i.e.,
\begin{equation}\label{ConvexityU}
    \mu(x)>\d>0, d\l(x)+2\m(x)>\d>0;
\end{equation}
so they can be extended to a neighborhood of  $\overline{\O}$ (for example, to the whole space) as infinitely differentiable functions with bounded derivatives, subject to the same kind of uniform estimate \eqref{ConvexityU}.

\subsection{The NP operator and polynomial compactness for the homogeneous media}\label{SubNP}
The Lam\'e equations may be appended by different kinds of boundary conditions.  The `conormal derivative' is involved in some of them. The conormal derivative at the boundary point $x\in\pO$,  is defined as
\begin{equation*}
    \pp=\pp_{\n_x}=\l(x)(\nabla\cdot u)\n_x +2\mu(x)(\widehat{\nabla}u)\cdot\n_x,
\end{equation*}
$\widehat{\nabla}u,$ being the `symmetric gradient', the matrix
\begin{equation}\label{symgrad}
    \widehat{\nabla}u=\frac12(\nabla u+(\nabla u)^\top),
\end{equation}
with  $\n_x=(\n^1_x,\dots,\n^d_x)$ denoting the outward unit normal to $\pO$ at $x$.

A more explicit description of the conormal, (stress, traction) operator $\pp$, given, e.g., in \cite{AgrLame}, \cite{Kupr79}, represents $\pp$ as
the operator $d\times d$ matrix with  components
\begin{equation*}
    T_{jk}\equiv T_{jk}(\partial_x,\n_x)=\l(x) \n^{j}_x \partial_k(x)+\l (x)\n^k_x\partial_j+(\l(x)+\m(x))\delta_{jk}\partial_{\n_x}.
\end{equation*}
We observe here that the action of the conormal derivative operator at the point $x\in\pO$ depends only on the values of the Lam\'e coefficients at this point and thus is the same for constant coefficients and for smooth variable coefficients.

For the case of constant coefficients, which we consider further on in this section, the NP operator is defined as
\begin{equation*}
    (\Kb f)(x)=(P.V.) \int_{\pO} \pp_{\n_x}\G(x-y)\f(y)d\s(y), \, x\in\pO,
\end{equation*}
where the integral is understood in the principal value sense. It is known that, for a smooth boundary, this is a bounded operator in all Sobolev spaces, however it is not compact, unlike its scalar (say, electrostatic) counterpart.

The Neumann-Poincar\'e operator $\Kb$ is related to the boundary problem in the following way. Consider the Neumann boundary problem in $\O$ with the prescribed stress $g\in C^{\infty} $ on $\pO$:
\begin{equation}\label{BVP}
    \Lc_{\l,\m}u=0, x\in\O;\, \pp_{\n_x}u(x)=g(x), \, x\in\pO.
\end{equation}
Then the solution may be sought for as the \emph{single layer} potential of a certain unknown  vector-valued function $\f(x)$ on $\pO$. \emph{For constant coefficients},
this means that
\begin{equation}\label{SLP}
    u(x)=\Sbb[\f](x)=\int_{\pO}\G(x-y)\f(y)d\s(y), \, x\in\O.
\end{equation}
The function $u(x)$  in \eqref{SLP} satisfies the equation $\Lc u=0$ in $\O$. The limit from inside of the conormal derivative of the single layer potential  is  expressed via the NP operator
 by the jump relation
 \begin{equation*}
    \pp_{\n_x} \Sbb[\f](x)=(-\frac12 \Ib+\Kb)[\f](x),\, \mathrm{a.e. }\, x\in\pO.
 \end{equation*}
 Thus, to find a solution of the boundary problem \eqref{BVP}, it suffices to solve the integral equation
 \begin{equation*}
    (-\frac12\Ib+\Kb)[\f](x)=g(x),\,  x\in \pO.
 \end{equation*}
The operator $\Kb$ is known to be noncompact, actually, a singular integral operator, see, e.g., \cite{Kupr79}, and in \cite{3D}, \cite{2D} it was established that this operator is polynomially compact,
\begin{equation*}
    \Kb(\Kb^2-\kb_0^2)\in\SF_\infty, d=3;\, (\Kb^2-\kb_0^2)\in\SF_\infty, d=2,
\end{equation*}
where $\SF_\infty$ denotes the ideal of compact operators in $L^2(\pO)$ and $\kb_0=\frac{\m}{2(\l+2\m)}$. Moreover, it was shown in \cite{3D} that these three points $0, \pm \kb_0$ are, in fact, points of essential spectrum of $\Kb$ and the discrete eigenvalues of $\Kb$ may   converge only to these three points.
This fact has been recently proved even for $C^{1, \alpha}$ ($\alpha>0$) surfaces \cite{KK}.
The reasoning in  \cite{3D} was based upon finding a local, and then global, representation of $\Kb$ as a zero order  pseudodifferential operator, up to a weaker error term, and then deriving an expression for the principal symbol for this operator.
In a similar way, earlier, in \cite{2D}, the NP operator has been studied for the 2-dimensional elasticity system. There, the essential spectrum of $\Kb$ consists of two points, $\pm \kb_0$, and the polynomial compactness, $(\Kb^2-\kb_0^2)\in \SF_{\infty}$ takes place.
\subsection{An alternative solution}\label{subAlter}
When presenting their considerations, the authors of \cite{3D} were unwary that the pseudodifferential representation of the NP operator for the homogeneous elasticity problem had been found almost 20 years earlier by M. Agranovich, B. Amosov and M. Levitin in \cite{AgrLame}. We explain here the reasoning in \cite{AgrLame} (in their notations). We do this, rather than simply referring to suitable pages in \cite{AgrLame}, since we will need it to compare with the reasoning when treating the variable coefficients case.

The NP operator, denoted  by $\Kb$ above, called in \cite{AgrLame} 'the direct value of the derivative of the single layer operator', is denoted \emph{there} by $B'$, and its $L_2(\pO)-$ adjoint, 'the direct value of the double layer potential' is therefore denoted by $B$. The main result in \cite{AgrLame} we need here is the representation of $B, B'$ as pseudodifferential operators.  The reasoning in this task goes in  the following way.

The Lam\'e operator $\Lc$ is a formally self-adjoint homogeneous second order \emph{constant coefficients } differential operator with symbol
\begin{equation*}
    \ell(\x)=-|\x|^2(\m E+(\l+\m)\L(\x)),
\end{equation*}
where $E$ is the identity $3\times 3$ matrix and the entries of $\L(\x)$ are $\x_j\x_k|\x|^{-2}$. It is easy to check that
\begin{equation*}
    \L^2(\x)=\L(\x),
\end{equation*}
and therefore,  under the convexity conditions, the symbol $\ell(\x)$ has the inverse, the matrix
\begin{equation}\label{Lambda inverse}
    \ell^{-1}(\x)=-\frac{1}{\m|\x|^2}\left( E-\frac{\l+\m}{\l+2\m}\L(\x)\right).
\end{equation}
This confirms the ellipticity of the Lam\'e operator. The formal self-adjointness is obvious. Moreover, the symbol $-\ell(\x)$ is a positive definite matrix since
\begin{equation*}
    -\langle\ell(\x)v,v\rangle=\m|\x|^2|v|^2+(\l+\m)|\langle\x, v\rangle|^2,
\end{equation*}
where $\langle .,.\rangle$ denotes the scalar product in the 3-dimensional complex space $\C^3$. Thus, the operator $-\Lc$ is strongly elliptic.

The fundamental solution of the Lam\'e equation is the inverse Fourier transform of the symbol $\ell(\x)^{-1}$, which has the leading term \eqref{Lambda inverse}.
 By directly checking the Lopatinsky
conditions, we can see that the Dirichlet and Neumann problems for the Lam\'e system are elliptic.

To obtain a pseudodifferential representation of the potential operators $B, B'$, a special frame and a local co-ordinate system are fixed. Near a point $x^0\in \pO$, the origin of the co-ordinate system  is placed at $x^0$, the co-ordinate $x_3$ takes the direction of the exterior normal to $\pO$ at $x^0$ and $x'=(x_1,x_2)$ lies in the tangent plane $T_{x^0}\pO$ to $\pO$ at $x^0$. Correspondingly, the frame in which the Lam\'e equations are expressed is located along these co-ordinate axes. Further on, let $x_3=X(x')$, $x'\in T_{x^0}\pO$, $x'=(x_1,x_2)$ be the equation of $\pO$ near $x^0$. The new co-ordinates $\tilde{x}_{j}, j=1,2,3,$ are introduced, rectifying $\pO$:
\begin{equation}\label{CoordRect}
    \tilde{x}_1=x_1, \,\tilde{x}_2=x_2, \tilde{x}_3= x_3-X(x').
\end{equation}
We omit further on the tilde sign, for simplicity of notation.
In these local co-ordinates, the operator $\Lc^{-1}$ ceases to be a pseudodifferential operator with symbol not depending on $x$, but, with accordance to the rules of variables change, becomes an operator with a variable symbol, however the principal symbol at the point $x_0$ remains the same since the Jacobi matrix of the transformation \eqref{CoordRect}  at this point is the unit matrix. Thus, the operator $\Lc^{-1}$ is, locally, near $x^0$ a pseudodifferential operator with principal symbol
\begin{equation}\label{symbol transformed}
\rb_0(x,\x)=\ell(\x)^{-1}(1+O(x')).
\end{equation}

The operator $A$, the direct value of the single layer potential, can be written as the composition of three operators. First, a function $f$ on $\pO$ extends by zero onto all of $\R^3$, obtaining the distribution $\Cc[f](x)$. Then the pseudodifferential operator $\Lc^{-1}$ is applied, in the sense of distributions, to $\Cc[f](x)$. Finally, the resulting distribution, being, in fact, a function, is restricted back to $\pO$. The  principal symbol $\s_{A}$ of this operator on $\pO$ can be calculated using Proposition 3.5 in \cite{AgrAmos}. We need to do this only at the point $x^0$, i.e., $(0,0,0)$. According to formula (3.8) there,
\begin{equation*}
    \s_{A}(x', \x')=\frac{1}{2\pi}\int_{-\infty}^{\infty} \rb_0(x',0;\x',\x_3)d\x_3, \x'\ne 0.
\end{equation*}
We substitute here the expression \eqref{symbol transformed} for the symbol $\rb_0(x',0;\x',\x_3)$ to obtain
\begin{equation*}
    \s_{A}(0; \x')=\frac{1}{2\pi\m}\int_{-\infty}^{\infty} \left(\frac {\l+\m}{\l+2\m}|\x|^{-2}\L(\x)-|\x|^{-2}E\right)d\x_3, \, \x'=(\x_1,\x_2)\ne 0.
\end{equation*}
Using the easily verified relations
\begin{equation*}
    \int_{-\infty}^{\infty}|\x|^{-2}d\x_3=\frac{\pi}{|\x'|}, \,\int_{-\infty}^{\infty}\frac{\x_3^2}{|\x|^4}d\x_3=\frac{\pi}{2|\x'|}, \,\int_{-\infty}^{\infty}\frac{d\x_3}{|\x|^4}=\frac{\pi}{2|\x'|^3},
\end{equation*}
we obtain
\begin{equation*}
    \s_{A}(0,\x')=\frac{1}{\m|\x'|}\left(\frac{\l+\m}{\l+2\m}\left(\L(\x')-E\right)\right).
\end{equation*}

Now the principal symbol of $B$ is calculated.
It is shown in \cite{Kupr79}, Chapter 2, \S 4,  that
\begin{gather*}
    (\pp_{\n_x}\G(x))_{jk}=\m(\l'-\m')\frac{\n_j(x)-\n_k(x)}{|x|^3}+\\
    \nonumber \left(\m(\m'-\l')\d_{jk}-6\m\m'\frac{x_jx_k}{|x|^2}\right)\sum_{l=1}^{3}\n_l(x)\frac{x_l}{|x|^3},
\end{gather*}
where $\m'=\frac{\l+3\m}{4\pi\m(\l+2\m)}$, $\l'=\frac{\l+\m}{4\pi\m(\l+2\m)}$.
Since the normal vector is orthogonal to the tangential ones, we have
\begin{equation*}
    \sum\n_l(y)\frac{x_l-y_l}{|x-y|}\le C|x-y|.
\end{equation*}
Therefore, the leading part of singularity of the  kernel of the operator $B$ on the surface $\pO$ has the entries
\begin{equation*}
    \m(\l'-\m')\frac{\n_k(y)(x_j-y_j)-\n_j(y)(x_k-y_k)}{2|x-y|^3}.
\end{equation*}
Here, by the definition of $\l',\m'$,
\begin{equation*}
    \l'-\m'=\frac{1}{2\pi(\l+2\m)}>0.
\end{equation*}
We determine the principal symbol of the operator $B$, using the same co-ordinate system as above, when calculating the symbol of the operator $A.$ We start with the pseudodifferential operator with the symbol $|\x'|^{-1}$. This is the  integral operator with kernel $(2\pi|x'-y'|)^{-1}$. Next, note that $\frac{x_j}{|x'|^3}=-\partial_j(|x|^{-1})$ and we know that the differentiation of the kernel in $x_j$ corresponds to the  multiplication of the symbol by $i\x_j$. Therefore  the expression for the principal symbol of $B$ is
\begin{equation}\label{PrincBconst}
    \s_B(\x')=\frac{\pi\m(\l'-\m')i}{|\x'|}\left(
                                             \begin{array}{ccc}
                                               0 & 0 & -\x_1 \\
                                               0 & 0 & -\x_2 \\
                                               \x_1 & \x_2 & 0 \\
                                             \end{array}
                                           \right).
\end{equation}
The matrix in \eqref{PrincBconst} is self-adjoint, and therefore the principal symbol of the NP operator $\Kb=B'$ is the same, $\s_{B'}=\s_B$.
The eigenvalues of the symbol \eqref{PrincBconst} are: $0, \pm \kb_0 $, where $\kb_0=\pi\m(\l'-\m')=\frac{\m}{2(2\m+\l)}$.
Now let us recall that the essential spectrum of an operator $\Kb$ in a Hilbert space consists of such complex numbers $\z$, for which the operator $\Kb-\z$ is not Fredholm. For classical pseudodifferential operators, the Fredholm property is equivalent to ellipticity (see, e.g., \cite{Agr Obzor}). Thus, the operator $B-\z$ is not Fredholm exactly for those numbers $\z$ that are contained in the range of the eigenvalue branches of the symbol \eqref{PrincBconst} since it is exactly at these points $\z$ that the matrix $\s_B(\x')-\z$ is not invertible. There are exactly three such numbers: $0, \pm\kb_0$. In this way, the main result of the paper \cite{3D} is reproduced.
\begin{theorem}\label{EssSpecConst}The essential spectrum of the NP operator $\Kb$ consists of tree points, $0, \pm\kb_0$.
\end{theorem}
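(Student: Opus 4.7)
The plan is to identify the NP operator $\Kb$ as a classical zero-order pseudodifferential operator on $\pO$, and then read off the essential spectrum from the range of the eigenvalue branches of its principal symbol.

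First, I would work with $\Kb = B'$, the $L^2(\pO)$-adjoint of the direct value $B$ of the double layer potential, since on a smooth boundary this gives the same essential spectrum and (as noted above) the same principal symbol, the matrix \eqref{PrincBconst} being self-adjoint. To describe $B$ up to a smoothing error, I would fix an arbitrary point $x^0\in\pO$, introduce the adapted coordinates from Subsect.~\ref{subAlter} with the $x_3$-axis along the outer normal, and apply the rectifying change of variables \eqref{CoordRect}. Because the Jacobian of this change at $x^0$ is the identity, the operator $\Lc^{-1}$ is locally a PsDO with principal symbol still equal to $\ell(\x)^{-1}$ from \eqref{Lambda inverse}, modulo lower order terms vanishing at the origin.

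Next, I would factor $B$ as the three-step composition: extend a vector-valued density on $\pO$ by zero to $\R^3$, apply $\Lc^{-1}$ in the distributional sense, apply the conormal derivative $\pp_{\n_x}$ on the boundary trace, and restrict back to $\pO$. Invoking Proposition 3.5 of \cite{AgrAmos}, the principal symbol of this composition at $x^0$ is obtained by integrating $\ell(\x)^{-1}$ in $\x_3$ and then multiplying by the symbol of the conormal derivative. The three elementary integrals already listed in the excerpt give first the principal symbol of the single layer operator $A$, and differentiation in the tangential variables (corresponding to multiplication by $i\x_j$) yields exactly the matrix \eqref{PrincBconst} as the principal symbol $\s_B(\x') = \s_{\Kb}(\x')$.

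Finally, I would invoke the standard PsDO criterion (see, e.g., \cite{Agr Obzor}): a classical zero-order PsDO $T$ on a compact manifold is Fredholm iff its principal symbol is invertible for every $\x'\ne 0$, and hence the essential spectrum of $T$ equals the closure of the union of the ranges of the eigenvalue branches of $\s_T$. Diagonalizing the $3\times 3$ skew-symmetric type matrix \eqref{PrincBconst} gives the three eigenvalues $0$ and $\pm \pi\m(\l'-\m') = \pm \kb_0$, each constant in $\x'\ne 0$. Thus $\sme(\Kb)=\{0,\pm\kb_0\}$, as claimed. The one step that requires the most care is the symbol computation in the second paragraph: keeping track of the coordinate change, the precise normalization in the calculus of layer-potential PsDOs, and the cancellations that produce the simple matrix \eqref{PrincBconst}; everything else is bookkeeping and invocation of known results.
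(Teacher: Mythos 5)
Your overall strategy is the same as the paper's: realize $\Kb$ (up to smoothing) as a classical zero-order pseudodifferential operator, compute its principal symbol, diagonalize it, and invoke the ellipticity-equals-Fredholmness criterion. Where you part ways is in how you obtain the symbol of $B$. The paper uses the composition $\g\circ\Lc^{-1}\circ\g^{*}$ together with formula (3.8) of \cite{AgrAmos} \emph{only} for the single layer operator $A$; for $B$ it switches methods, reading off the leading kernel singularity from Kupradze's explicit formula for $\pp_{\n_x}\G(x)$ and converting kernel differentiation into symbol multiplication. You instead propose to stay in the symbol calculus and compute $\s_B$ from the composition including the conormal derivative. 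This is a coherent alternative, but as you have written it there are two gaps.

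First, the $\x_3$-integral. Formula (3.8) of \cite{AgrAmos} gives an absolutely convergent integral precisely because $\rb_0=\ell^{-1}$ is homogeneous of order $-2$. Once you insert the order-$1$ conormal symbol, the integrand is only $O(|\x|^{-1})$ as $\x_3\to\infty$ and the integral is no longer absolutely convergent; its conditional (principal value or one-sided contour) interpretation is exactly what produces the jump $\pm\tfrac12\Ib$ on one side and the direct value $B$ on the other. Your phrase ``integrating $\ell(\x)^{-1}$ in $\x_3$ and then multiplying by the symbol of the conormal derivative'' cannot be taken literally: after integrating $\x_3$ away you can no longer multiply by a symbol that depends on $\x_3$. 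You must multiply first and then integrate, and you must say in what regularized sense.

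Second, the conormal symbol involves both the tangential frequencies $\x_1,\x_2$ and the normal one $\x_3$, but you only discuss ``differentiation in the tangential variables (corresponding to multiplication by $i\x_j$).'' The $\x_3$-proportional pieces produce integrands of the form $\x_3\rb_0(\x',\x_3)$, which are \emph{odd} in $\x_3$ (since $\ell^{-1}$ is even in each $\x_j$) and hence vanish under the principal-value $\x_3$-integral; that is the mechanism by which only the tangential part survives and \eqref{PrincBconst} comes out so simply. Omitting this step leaves the computation incomplete.

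One small terminological point: the matrix in \eqref{PrincBconst} is $i$ times a real antisymmetric matrix, hence \emph{Hermitian}, not ``skew-symmetric type''; its self-adjointness is what makes the three eigenvalue branches real (and is explicitly noted in the paper). With the two gaps above filled, the composition route gives a valid alternative derivation of the same symbol, and the conclusion then follows from the PsDO Fredholm criterion exactly as you say.
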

The corollary, stating that $\Kb$ is polynomially compact, namely, that $\Kb(\Kb^2-\kb_0^2)\in\SF_\infty$ follows directly by calculating the principal symbol of this operator and using the spectral mapping theorem.

 \begin{remark}\label{RemFS}The analysis of the reasoning above demonstrates which properties of the fundamental solution have been, actually, used in this proof. Let us replace the kernel $\G(x-y)$ in the definition of our potentials,  by some other function $\bG(x,y)$, $x,y\in \overline{\O}$, smooth in its variables for $x\ne y$, up to the boundary $\pO$, and possessing, for smooth variable $\m(x),\l(x),$ the same singularity  as $x\to y$:
\begin{equation*}
    \bG(x,y)=\G_{\l(x),\m(x)}(x-y)+R(x,y)
\end{equation*}
where $R(x,y)$ (automatically, smooth for $y\ne x$) is subject to
\begin{equation*}
    R(x,y)=O(1);\, \nabla_x R(x,y), \nabla_y R(x,y)=O(|x-y|^{-1}).
\end{equation*}
Then, in the construction of the single layer and double layer potentials, the representation of the principal symbol at a point at the boundary involves exclusively the values of $\m$ and $\l$ at this point.
This circumstance will be utilized further on.

We would note also that the fundamental solution $\G(x,y)$ is not unique. In particular, if $R(x,y)$ is a function defined for $x,y$ in a certain neighborhood $\O'$ of $\overline{\O}$, symmetric, $R(x,y)=R(y,x)^*$ and satisfying in $\O'$ the homogeneous Lam\'{e} equation in $x$ variable (and, by symmetricity, in $y$ variable), the potentials constructed with the kernel $\G+R$ instead of $\G$ would possess the same properties, such as being the solution of the Lam\'{e} equations in $\O'$ and jump relations on $\pO$, as the usual potentials. The only difference is that the $\G+R$- potentials do not satisfy the conditions at infinity (they  may even turn out to be not defined far away from $\O$), but this latter property is not needed in our considerations.\end{remark}

\section{Single layer and double layer potentials for a nonhomogeneous body}\label{SectVarParameters}

\subsection{The fundamental solution}
From now on, we suppose that the Lam\'{e} coefficients $\m(x)$, $\l(x)$ are functions of $x\in\overline{\O}$, smooth up to the boundary, with the convexity conditions satisfied uniformly in $\overline{\O}$. We consider the body $\O$ as a subset of the three-dimensional torus $\T^3$. The coefficients $\m(x)$, $\l(x)$ can be continued as smooth functions on $\T^3$ with the convexity conditions \eqref{ConvexityU} satisfied uniformly in $\T^3$. The elasticity operator $-\Lc$ can be understood as a second order self-adjoint  strongly elliptic system on $\T^3$. Moreover, it is non-negative in the sense of $L_2(\T^3)$

As follows from the general theory of elliptic operators (see, e.g., \cite{Agr Obzor}), the operator $\Lc$ has discrete spectrum, in particular the zero subspace $E_0$ is finite dimensional. It is the space where the quadratic form
\begin{equation*}
    (-\Lc u,u) =\int_{\T^3}(\m(x)|\nabla u|^2+(\m+\l)|\div u|^2)dx
\end{equation*}
vanishes.  On the orthogonal complement to this subspace the operator $-\Lc$ is positively definite.

We fix a smooth nonnegative function $h(x)$ such that $h(x)\equiv 0$ in $\O$ and it is positive on some open subset in  $\O'\subset \T^3\setminus \overline{\O}$. Therefore, the quadratic form $-(\Lc u,u)+\int h(x)|u|^2 dx$ is strictly positive and the operator $-\Lc^h\equiv-\Lc+h$ is invertible in $L_2(\T^3)$. Note that on the domain $\O$ the differential operators $\Lc$ and $\Lc^h$ act in the same way. We denote by $\Rc^h $ the inverse operator for $\Lc^h$. It is a bounded order $-2$ pseudodifferential operator
 on $\T^3$. Its principal symbol equals the inverse of the principal symbol of $\Lc$, i.e., is given by the proper generalization of \eqref{Lambda inverse}:
 \begin{equation}\label{PrincSymbVar}
    \s_{\Rc^h}= -\frac{1}{\m(x)|\x|^2}\left( E-\frac{\l(x)+\m(x)}{\l(x)+2\m(x)}\L(\x)\right).
 \end{equation}
 We note here that due to the nice topology of the torus, the expression \eqref{PrincSymbVar} makes sense globally on $\T^3$.

 Applying the Fourier transform, or, what is more convenient, the Fourier series, we can express $\Rc^h$ as a self-adjoint  integral operator with weakly polar polyhomogeneous kernel,
 \begin{equation*}
    \Rc^h [u](x) =\int_{\T^3}\bG(x,y)u(y)dy,
 \end{equation*}
 with leading singularity as $x\to y$
having order $-1$.

By the construction, the kernel $\bG(x,y)$ satisfies the equation $\Lc^h_x \bG(x,y)=0$ for $x,y\in \overline{\O}$ as soon as $x\ne y$, as well as the adjoint equation $\Lc^h_y \bG(x,y)=0$. Since $h=0$ in $\overline{\O},$  $\bG(x,y)$ satisfies the equation $\Lc_x \bG(x,y)=0$, $\Lc_y \bG(x,y)=0$ for $x,y\in \overline{\O}$,  $x\ne y.$ We will call $\bG(x,y)$ the fundamental solution of the Lam\'{e} system (this kernel, of course depends on the choice of the function $h$ but we do not need to mark this dependence in our notations, see Remark \ref{RemFS}.).

Being the integral kernel of a  classical order $-2$ pseudodifferential operator on $\T^3$, the kernel $\bG(x,y)$ admits a representation
\begin{equation*}
    \bG(x,y)\sim \bG_{-1}(x,x-y) +\widetilde{\bG}(x,y),
\end{equation*}
where $\bG_{-1}(x,x-y)$ is an integral  kernel, homogeneous  of order $-1$ in $x-y$, and $\widetilde{\bG}(x,x-y)$ is a bounded kernel, satisfying $\nabla_y \widetilde{\bG}(x,x-y)=O(|x-y|^{-1})$. Since the principal symbol of the operator $\Rc^h$ at the point $x\in\T^3$ depends  on the value of the Lam\'{e} coefficients at this point only, the same is correct for the kernel $\bG_{-1}$. Therefore, $\bG_{-1}(x,x-y)$ is given by the Kelvin matrix calculated for the values of $\m,\l$ at the point $x$.

\subsection{The layer potentials and operators} The approach we use to construct the single and double layer potentials follows mostly the general considerations presented  in \cite{McLean}, Ch 6., \cite{Agrgen}, Ch.12, and \cite{Duduchava}.

The single layer potential of a distribution-density $\psi\in H^{-\frac12}(\pO)$ is defined in the following way. On the space of smooth functions on $\T^3$ we consider the mapping $\g$ associating with a  function its restriction to $\pO$. By the
embedding and trace theorems, $\g$ extends to a bounded operator in Sobolev spaces, $\g: H^{1-s}(\T^3)\to H^{-s+\frac12}(\pO)$, $s<\frac12$. The adjoint operator $\g^*$, the one of extension of the distribution on $\pO$ to $\T^3$ by zero, is a bounded operator from $H^{s-\frac12}(\pO)$ to $H^{s-1}(\T^3)$.
 Now we apply the pseudodifferential operator $\Rc^h$ to $\g^*\psi$:
 \begin{equation*}
    \Ac[\psi]=\Rc^h[\g^* \psi].
 \end{equation*}
 It is easy to verify (see \cite{McLean}, Sect. 6, or \cite{Agrgen}, Sect 12.2, for details) that for $\psi\in H^{s-\frac12}(\pO)$, $\Ac[\psi]$ is a function in $H^{1+s}(\T^3),$ satisfying the differential equation $\Lc^h \Ac[\psi]= \g^* \psi$ in the sense of distributions and, in particular, $\Lc \Ac[\psi]=0$ in $\O$. The restriction of $\Ac[\psi]$ to $\pO$ makes therefore sense and thus the direct value of the single layer potential is defined as
 \begin{equation*}
    A[\psi]=\g \Ac[\psi].
 \end{equation*}
 The operator $A$ is an order $-1$ positive pseudodifferential operator on $\pO$. We denote by $\Ac(x,\x)$ its leading symbol, which at a point $x\in\pO$ depends only on the values of the Lam\'e coefficients $\l$,$\m$ at the point $x.$ As any order $-1$ pseudodifferential operator on a two-dimensional surface, the operator $A$ is an integral operator, with  polyhomogeneous kernel, the leading singularity being of order $-1$. We denote the corresponding part of the kernel by $\aF(x,y,x-y)$, the function $\aF$ being smooth in the first two variables and order $-1$ homogeneous in the third one.

 Now we pass to defining the double layer potential. For a strongly elliptic Lam\'{e} system, under our smoothness and ellipticity conditions, the operator of the conormal derivative \eqref{symgrad}, written as
 \begin{equation*}
    \pp[u](x)=\sum\n_j(x)a_{j,k}(x)\g\partial_k u(x),
 \end{equation*}
is well defined for functions $u\in H^{2-s}(\T^3)$, $s<\frac12$. (Here $a_{jk}=a_{jk}(x)$ are the matrix coefficients in the Lam\'{e} system.) In fact, the differentiation and then the passage to the boundary decrease the index $2-s$ by $\frac32$ and traces from both sides coincide; so, $\pp$ is a bounded operator from $H^{2-s}(\T^3)$ to $H^{\frac12-s}(\pO).$
The adjoint operator $\pp^*$, defined by
\begin{equation*}
    (\pp^*[\f],u)_{\T^3}=(\f, \pp[u])_{\pO}, ,
\end{equation*}
acts from $H^{s-\frac12}(\pO)$ to $H^{2-s}(\T^3)$.
We define the double layer operator $\Bc$ by
\begin{equation}\label{DoubLayerGen}
    \Bc[\f]=\Rc^h \pp^*[\f].
\end{equation}
As explained in Section 12.2 in \cite{Agrgen}, this definition, for $\f\in H^{\frac12}(\pO)$, coincides with the more intuitive usual  definition
\begin{equation}\label{DoubleLayerIntuitive}
    \Bc[\f](x)=\int_{\pO} \pp_{\n_y}\bG(x,x-y)\f(y) dS(y),\, x\in\O,
\end{equation}
where, recall, $\bG(x,x-y)$ is the integral kernel of the resolvent operator $\Rc^h=(\Lc^h)^{-1}$.

The integral operator with kernel $\pp_{\n_y} \bG(x,x-y)$, acting from the boundary to the domain, is adjoint to the operator with kernel $\bG(x,x-y) \pp_{\n_x}^*$, acting from the domain to the boundary. In fact,
for a real-valued smooth test function $g(x)$ with support lying inside $\O$ or outside $\overline{\O}$, we have
\begin{gather*}
 \iint\limits_{\T^3\times\T^3}\bG(x,x-y)\pp_{\n_x}^*[\f](y)g(x)dxdy=
 \int\limits_{\T^3} \pp_{\n_y}[\f](y)\int\limits_{\T^3}\bG(x,x-y)g(x)dx dy=\\ \nonumber \int\limits_{\pO} \f(y)\pp_{\n_y}\int\limits_{\T^3}\bG(x,x-y)g(x)dx dS_y=\int\limits_{\T^3}\int_{\pO}(\pp_{\n_y}\bG(x,x-y))[\f](y)g(x)dS_ydx.
\end{gather*}

 The function $\Bc[\f](x)$, due to the construction of the kernel $\bG(x,x-y)$, satisfies in $\O$ the equation $\Lc \Bc[\f](x)=0$. What we need now is to study the behavior of the potential \eqref{DoubLayerGen} at the boundary $\pO$. This
behavior has been investigated in detail for general strongly elliptic second order system, see \cite{McLean}, Ch.6, \cite{Agrgen}, Ch.12 (a construction for a strongly elliptic system of an arbitrary order is described in \cite{Seeley}). In our case, the reasoning can be made more transparent, with an explicit calculation of the principal symbol avoiding the technicalities.

We consider the behavior of the potential \eqref{DoubleLayerIntuitive} as the point $x$ approaches $x^0\in \pO$ from inside. The operator $\Bc$ can be represented as the sum
$\Bc=\Bc_{-1}+\tilde{\Bc}$, where $\Bc_{-1}$ is the integral operator with kernel $\pp_y\bG_{-1}(x,x-y)$ and $\widetilde{\Bc}$ is the remainder, the operator with kernel $\pp_y\widetilde{\bG}(x,y)$. The latter kernel has singularity of order not greater then $-1$ and therefore is  continuous up to the boundary (for a continuous function $\f$). Therefore, the interior limit value of $\widetilde{\Bc}[\f]$ on $\pO$ is the direct value of the potential, $\int_{\pO}\pp_y\tilde{\bG}(x_0,y)\f(y) dS(y)$, $x_0\in \pO.$ Being a kernel with singularity of order not higher than $-1$, this limit operator is a pseudodifferential operator of order not greater than $-1$ on $\pO$.

As for the leading term, we can, for $x\to x^0$, represent $\bG_{-1}(x,x-y)$ as
\begin{equation}\label{splitting}
\bG_{-1}(x^0,x^0-y)+ (\bG_{-1}(x,x-y)-\bG_{-1}(x^0,x^0-y)).
\end{equation}
 The second term in \eqref{splitting} defines an operator $\Bc'$, and for a continuous function $\f$, $\Bc'[\f](x)\to 0$ as $x\to x^0$.  As for the first term in \eqref{splitting}, it is nothing but the double layer potential for the Lam\'{e} operator with constant coefficients, actually, $\m,\l$, frozen at the point $x^0\in \pO$.
For this latter potential, the limit value is known; it is described in Section \ref{SecLameConst}:
\begin{equation*}
    \lim_{x\to x^0}\Bc[\f](x)=\frac12 \f(x^0)+ B[\f](x^0),
\end{equation*}
where the NP operator, in other words, the direct value of the double layer potential, $x^0\in \pO$ is defined as:
\begin{equation}\label{direct value}
B [\psi] (x^0)= (P.V)\int_{\pO}\pp_y\bG(x^0,x^0-y)\f(y) dS(y),\, .
\end{equation}

Thus, the operator $B$ defined in \eqref{direct value} is natural to consider as the generalization, to the case of a nonhomogeneous media, of the classical NP operator, since this operator plays the identical role and can be used in the usual way for the study of  boundary problems.
The operator $B$ consists of the leading term $B_0$, with kernel $\pp_{\n_y}\bG_{-1}(x_0,x_0-y)$,
and a weaker
one, a pseudo-differential operator on $\pO$ of order $-1$, therefore, a compact one. As  for the leading term, for a fixed $x^0\in \pO$,
it is identical with the NP operator for the homogeneous medium, with Lam\'{e} coefficients frozen at the point $x^0.$ The principal symbol for this operator was already calculated, see Sect. \ref{SecLameConst}. In the same local co-ordinates and frame, it equals

 \begin{equation}\label{PrincBNonconst}
    \s_B(x^0,\x')=\frac{\pi\m(x^0)(\l'(x^0)-\m'(x^0))i}{|\x'|}\left(
                                             \begin{array}{ccc}
                                               0 & 0 & -\x_1 \\
                                               0 & 0 & -\x_2 \\
                                               \x_1 & \x_2 & 0 \\
                                             \end{array}
                                           \right).
\end{equation}

\subsection{The essential spectrum of the NP operator and of the modified NP operator.}

The eigenvalues of the symbol \eqref{PrincBNonconst} are calculated easily; they are equal to
$0,\pm\kb_0(x^0)$, where $\kb_0(x^0)=\frac{\m(x^0)}{2(2\m(x^0)+\l(x^0))}$ for a point $x^0$ at the boundary. It is well known that for a matrix pseudodifferential  operator of order zero on the manifold $\pO$, the essential spectrum consists of the range of eigenvalues of the principal symbol, considered as functions on the cosphere bundle of $\pO$ (just observe, if $\z$ does not belong to $\{0\}\cup\Ran(\pm\kb_0)$, the symbol $\s_{B}-\z$ is invertible for all $(x^0,\x)\in T^*(\pO)$ and a pseudodifferential operator with principal symbol $(\s_{B}-\z)^{-1}$ is a regularizer for $B-\z$; on the other hand, if $\kb_0-\z, \z\ne0,$ vanishes somewhere, the operator $B-\z$ is not elliptic and, therefore, not Fredholm). This gives us our main result on the essential spectrum.

\begin{theorem}\label{mainTheorem}Let $\O$ be an elastic isotropic body with Lam\'e  coefficients $\m(x)$, $\l(x)$, smooth up to the smooth boundary. Let $\pO_1,\dots,\pO_N$ be the connected components of the boundary $\pO$. For each component $\pO_l$, $l=1,\dots,N$ denote by $\pm\D_l$ the (of course, closed) range of the function $\kb_0(x)=\pm\frac{\m(x)}{2(2\m(x)+\l(x))}$, $x\in \pO_l$. Then the essential spectrum of the NP operator coincides with the union of the point $0$ and the union of the intervals $\pm\D_l$, $l=1,\dots,N$.
\end{theorem}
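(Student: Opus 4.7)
The plan is to reduce the computation of $\sme(B)$ to a computation of the range of eigenvalues of the principal symbol $\s_B(x^0,\x')$ of \eqref{PrincBNonconst} on the cosphere bundle $S^*\pO$, and then to evaluate that range explicitly.

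First I would exploit the decomposition established in the previous subsection: writing $\bG(x,x-y)=\bG_{-1}(x,x-y)+\widetilde{\bG}(x,y)$, the NP operator splits as $B=B_0+B_1$, where $B_0$ is the direct value of the double layer potential built with $\bG_{-1}(x^0,x^0-y)$ (i.e., with Lam\'e coefficients frozen at $x^0$) and $B_1$ is the integral operator on $\pO$ whose kernel inherits the singularity of $\pp_{\n_y}\widetilde{\bG}$, which is of order not greater than $-1$. Hence $B_1$ is a classical pseudodifferential operator of negative order on the compact manifold $\pO$, therefore compact on $L^2(\pO)$. By Weyl's stability theorem $\sme(B)=\sme(B_0)$, and $B_0$ is a zero-order classical pseudodifferential operator with principal symbol given by \eqref{PrincBNonconst}.

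Second, I would invoke the standard Fredholm criterion for matrix classical pseudodifferential operators of order zero on a closed smooth manifold (as in the Agranovich survey already cited): such an operator $T$ is Fredholm on $L^2$ iff its principal symbol $\s_T(x^0,\x')$ is invertible at every $(x^0,\x')\in S^*\pO$. The necessity follows by constructing, from $(\s_T)^{-1}$, a parametrix that inverts $T$ modulo compacts; the sufficiency is obtained by localization and testing the symbol against appropriately scaled oscillatory test vectors. Applied to the family $B_0-z\Ib$, this identifies $\sme(B_0)$ with the set of those $z\in\C$ for which $\s_B(x^0,\x')-z\Ib$ fails to be invertible for some $(x^0,\x')$, that is, with the range of eigenvalues of $\s_B$ over $S^*\pO$.

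Third, I would compute those eigenvalues from the explicit $3\times 3$ matrix in \eqref{PrincBNonconst}. The matrix has the block form producing characteristic polynomial proportional to $z(z^2-c(x^0)^2|\x'|^{-2}(\x_1^2+\x_2^2))$ with $c(x^0)=\pi\m(x^0)(\l'(x^0)-\m'(x^0))$; the $|\x'|$ in the denominator cancels with the length of $(\x_1,\x_2)$ in the numerator, so the eigenvalues are $\x'$-independent and equal $0,\pm\kb_0(x^0)$, where $\kb_0(x^0)=\m(x^0)/(2(2\m(x^0)+\l(x^0)))$. Consequently the range of the symbol eigenvalues over $S^*\pO$ is exactly $\{0\}\cup\{\pm\kb_0(x^0):x^0\in\pO\}$. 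Since each component $\pO_l$ is connected and $\kb_0$ is continuous on $\overline{\O}$, the image $\kb_0(\pO_l)=\D_l$ is a closed interval, and taking the union over $l$ together with the $\pm$ branches and the point $0$ yields the stated description of $\sme(B)$.

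The main obstacle, or rather the only non-mechanical step, is the clean justification of the second point: making sure that the symbolic Fredholm criterion actually applies to the variable-coefficient operator $B_0$ rather than just to its frozen-coefficient local pieces. This is where the framework of \cite{McLean} and \cite{Agrgen} is needed so that $B_0$ is \emph{globally} a classical zero-order pseudodifferential operator on $\pO$ with the symbol \eqref{PrincBNonconst}, and not merely a family of local operators; once this is in place the eigenvalue calculation and the topological identification of $\D_l$ are immediate.
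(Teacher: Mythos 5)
Your proposal follows the same route as the paper: split $B$ into the zero-order leading part with principal symbol \eqref{PrincBNonconst} plus a negative-order (hence compact) remainder, identify $\sme$ via the Fredholm/ellipticity criterion for classical zero-order pseudodifferential operators on the closed surface $\pO$, compute the symbol eigenvalues $0,\pm\kb_0(x^0)$, and then use continuity and connectedness of each $\pO_l$ to describe the range. One small terminological slip worth fixing: in your second step you have ``necessity'' and ``sufficiency'' interchanged --- the parametrix built from $(\s_T)^{-1}$ proves \emph{sufficiency} of symbol invertibility for the Fredholm property, while the oscillatory test-vector/localization argument proves its \emph{necessity}.
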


To illustrate this result, consider some special cases. Suppose that $\pO$ consists of several connected components, $\pO_l, \, l=1,\dots,N$. If the Lam\'{e} parameters $\m(x),\l(x),$ probably, variable, satisfy,
\begin{equation}\label{locconst}
    \frac{\m(x)}{2(2\m(x)+\l(x))}=\kb_l, \, x\in\pO_l, l=1,\dots,N,
\end{equation}
so that $\kb_0$ is constant on each component of the boundary, then the intervals $\pm\D_l$ degenerate to single points and the essential spectrum of the NP operator consists of the points $0, \pm\kb_l$, i.e., of an odd number of points. In this case, the $NP $ operator $B$ is polynomially compact,
\begin{equation*}
    p(B)\in\SF_\infty,\, p(s)=s\prod(s^2-\kb_l^2).
\end{equation*}
One can see that this is the only case when this polynomial compactness takes place. If, on the contrary, the function $\kb_0(x)$ is nonconstant on the components $\pO_l,$ for all $l$, the range of $\kb_0$ on $\pO_l$ is a nondegenerate closed interval $\D_l\subset (0,\infty)$. Therefore, the intervals $\pm \D_l$ compose the continuous spectrum of $B$. It is also possible that on some components of the boundary the function $\kb_0$ is constant while it is not constant on other components. In this general case the essential spectrum of $B$ consists of an even collection of symmetric intervals $\pm \D_l$, the point zero, as well  as, possibly, of an even set of isolated points.

We can introduce the notion of a \emph{modified NP operator} so that, even in the nonhomogeneous case, the polynomial compactness takes place.

\begin{definition} Let $B$ be the NP operator on $\pO$ for a smooth (variable) elastic media. The modified NP operator $\tilde{B}$ is defined as
\begin{equation}\label{modif}
    \widetilde{B}=\kb_0(x)^{-1}B, \, x\in \pO.
\end{equation}
\end{definition}
\begin{theorem}\label{modifTh}
The essential spectrum of the modified NP operator \eqref{modif} consists of three points, $0, \pm 1$. The operator is polynomially compact: $p(\widetilde{B})\in\SF_{\infty}, p(t)=t(t^2-1).$
\end{theorem}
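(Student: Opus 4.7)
The plan is to reuse the pseudodifferential machinery already set up in Section \ref{SectVarParameters} for the NP operator $B$, observing that $\kb_0(x)^{-1}$ is a smooth nonvanishing multiplier on $\pO$. First I would note that the uniform convexity conditions \eqref{ConvexityU} imply $\kb_0(x)=\frac{\m(x)}{2(2\m(x)+\l(x))}$ is smooth and bounded away from zero on $\overline{\O}$, in particular on $\pO$. Multiplication by $\kb_0^{-1}$ is then a classical zero order pseudodifferential operator on $\pO$ with scalar principal symbol $\kb_0(x)^{-1}E$, and consequently $\widetilde{B}=\kb_0^{-1}B$ is a classical zero order pseudodifferential operator with principal symbol
\begin{equation*}
\s_{\widetilde{B}}(x^0,\x')=\kb_0(x^0)^{-1}\s_{B}(x^0,\x').
\end{equation*}

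Since the eigenvalues of $\s_B(x^0,\x')$, as computed immediately after \eqref{PrincBNonconst}, are $0,\pm\kb_0(x^0)$, the eigenvalues of $\s_{\widetilde{B}}(x^0,\x')$ are the constants $0,+1,-1$ at every point $(x^0,\x')$ of the cosphere bundle of $\pO$. Applying the same symbol-to-essential-spectrum principle used in the proof of Theorem \ref{mainTheorem} --- a number $\z$ lies in $\sme(\widetilde{B})$ exactly when the matrix $\s_{\widetilde{B}}(x^0,\x')-\z E$ fails to be invertible for some $(x^0,\x')$, since then $\widetilde{B}-\z$ fails to be elliptic and hence not Fredholm --- yields at once that
\begin{equation*}
\sme(\widetilde{B})=\{0,+1,-1\},
\end{equation*}
each of the three values being attained at every point of $\pO$, so none of them is missed.

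For the polynomial compactness assertion I would look at $p(\widetilde{B})=\widetilde{B}(\widetilde{B}^2-\Ib)$. The composition of classical zero order pseudodifferential operators is again of order zero, and the principal symbol of the composition equals the product of the principal symbols, so $p(\widetilde{B})$ is a zero order PsDO on $\pO$ with principal symbol $\s_{\widetilde{B}}(\s_{\widetilde{B}}^2-E)$. At every $(x^0,\x')$ the matrix $\s_{\widetilde{B}}(x^0,\x')$ has eigenvalues $0,\pm 1$, which are exactly the roots of the scalar polynomial $p(t)=t(t^2-1)$; hence by the Cayley--Hamilton theorem (or by direct diagonalization, since $\s_{\widetilde{B}}$ is similar to $\diag(0,1,-1)$) the matrix $p(\s_{\widetilde{B}}(x^0,\x'))$ vanishes identically. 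Therefore $p(\widetilde{B})$ has vanishing principal symbol, is a PsDO of order at most $-1$ on the two dimensional compact manifold $\pO$, and so belongs to $\SF_{\infty}$ on $L^2(\pO)$.

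The main obstacle is really only a bookkeeping one: one must check that the symbol calculus steps above commute cleanly with the algebraic manipulations, in particular that the lower order corrections appearing in composing $\kb_0^{-1}$ with $B$ and in forming $\widetilde{B}^2$ do not spoil the principal-symbol computation. These corrections are harmless because only the top-order symbol enters both the essential-spectrum characterization and the compactness conclusion. Beyond this, the theorem reduces to the trivial scalar identity $t(t^2-1)=0$ for $t\in\{0,\pm 1\}$ and, in contrast to Theorem \ref{mainTheorem}, requires no assumption on the topology of $\pO$ or on the constancy of $\kb_0$ along the boundary components.
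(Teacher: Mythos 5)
Your proposal is correct and takes essentially the same approach as the paper: compute the principal symbol of $\widetilde{B}$ as $\kb_0(x)^{-1}\s_B(x,\x')$, note that its eigenvalues are the constants $0,\pm 1$, and invoke the symbol-to-essential-spectrum correspondence already used for Theorem \ref{mainTheorem}. The paper's proof stops at the eigenvalue computation and simply remarks that this reduces to the homogeneous case; you supply the remaining routine details, in particular that $p(\s_{\widetilde{B}})=0$ by Cayley--Hamilton (or diagonalization, the symbol being Hermitian with distinct eigenvalues), so that $p(\widetilde{B})$ is a pseudodifferential operator of order at most $-1$ and therefore compact.
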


\begin{proof}The principal symbol of the  zero order operator $\tilde{B}$ equals $\kb_0(x)^{-1}\s_B(x,\x'),$ so it is the matrix
\begin{equation*}
  i|\x'|^{-1}  \left(
                                             \begin{array}{ccc}
                                               0 & 0 & -\x_1 \\
                                               0 & 0 & -\x_2 \\
                                               \x_1 & \x_2 & 0 \\
                                             \end{array}
                                           \right).
\end{equation*}
This matrix has eigenvalues $0,\pm 1$, so we return to the situation of a homogeneous media.
\end{proof}

\section{Eigenvalues convergence rate}
\subsection{General eigenvalues estimates.} We have found the essential spectrum of the operator $\Kb$. This operator may also  have eigenvalues. Some of them may lie in the gaps of the essential spectrum  (not excluding the semiinfinite ones); it is also possible that some eigenvalues are embedded in the essential spectrum.  For the eigenvalues  in the gaps, the  only limit points may be the points of the essential spectrum. It turns out that it is possible to evaluate the rate of convergence of the eigenvalues to these points, provided the boundary $\pO$ is infinitely smooth. We consider the case of a connected boundary  $\pO$ and a homogeneous body first.
In this case, due to our Theorem \ref{mainTheorem}, the essential spectrum consists of three points, $0, \pm \kb_0$. We denote the eigenvalues in some neighborhoods of these points by $\lambda_j^{0,\pm},\lambda_j^{+,\pm},\lambda_j^{-,\pm}$, where, for example, $\lambda_j^{-,+}$ denotes the sequence of eigenvalues approaching $-\kb_0$ from above, with similar meaning for other notations.
\begin{theorem}\label{Th.Eigenvalues} Let $\O$ be a bounded domain with smooth connected boundary $\pO$. Then  for the eigenvalues of the operator $\Kb$ the following estimates hold:
\begin{equation}\label{rate of convergence}
    \lambda_j^{0,\pm}=O(j^{-\frac12}), \, |\lambda_j^{\pm,\pm}-(\pm\kb_0)|=O(j^{-\frac12}).
\end{equation}
Of course, if one of the above sequences of eigenvalues is finite or void, the corresponding estimate holds by triviality.
\end{theorem}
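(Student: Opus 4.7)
The plan is to exploit the polynomial compactness $\Kb(\Kb^2-\kb_0^2)\in\SF_\infty$ combined with the fact, established in Subsection \ref{subAlter}, that $\Kb$ is a classical zero-order pseudodifferential operator on the smooth closed two-dimensional manifold $\pO$. By the PsDO calculus, $p(\Kb):=\Kb(\Kb^2-\kb_0^2)$ is a zero-order PsDO with principal symbol $\s_B(\s_B^2-\kb_0^2 I)$. At every point of $T^*\pO\setminus 0$ the matrix $\s_B$ from \eqref{PrincBconst} has eigenvalues $0,\pm\kb_0$ and is therefore annihilated by $p$, so the principal symbol of $p(\Kb)$ vanishes identically and $p(\Kb)$ is a classical PsDO of order at most $-1$ on $\pO$.

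Next I would invoke the classical Birman--Solomyak (or Weyl law) singular value asymptotics for pseudodifferential operators: on a closed $d$-dimensional manifold, a classical PsDO of order $-m$ has singular numbers $s_j=O(j^{-m/d})$. With $m=1$, $d=2$ this gives $s_j(p(\Kb))=O(j^{-1/2})$. Since $\Kb$, hence $p(\Kb)$, need not be selfadjoint in $L_2(\pO)$, I would then appeal to Weyl's multiplicative inequality $\prod_{k\le n}|\mu_k(T)|\le\prod_{k\le n}s_k(T)$; applied with $T=p(\Kb)$ and combined with Stirling's formula, it extracts the same decay rate for the eigenvalues enumerated with algebraic multiplicity in decreasing modulus: $|\mu_j(p(\Kb))|=O(j^{-1/2})$.

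The final step is a spectral mapping argument transferring the estimate from $p(\Kb)$ back to $\Kb$. Any eigenvalue $\lambda$ of $\Kb$ produces an eigenvalue $p(\lambda)$ of $p(\Kb)$, with algebraic multiplicity at least as large. The polynomial $p(s)=s(s-\kb_0)(s+\kb_0)$ has three simple roots at which $p'$ is nonzero, so $p$ is a local diffeomorphism near each root $r\in\{0,\pm\kb_0\}$ and $|\lambda-r|\le C_r|p(\lambda)|$ for $\lambda$ in some neighborhood of $r$. For any of the six sequences $\lambda_j^{*,*}$ in the theorem, indexed so that the distance to the corresponding root is nonincreasing, the $j$-th member of $\{p(\lambda_j^{*,*})\}$ occupies some position $N(j)\ge j$ in the global decreasing-modulus enumeration of eigenvalues of $p(\Kb)$, hence $|p(\lambda_j^{*,*})|\le|\mu_j(p(\Kb))|=O(j^{-1/2})$. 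Combined with the previous inequality, this yields \eqref{rate of convergence}.

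The main obstacle is the step from singular-value asymptotics to eigenvalue asymptotics for the non-selfadjoint operator $p(\Kb)$, which goes through Weyl's multiplicative inequality and requires care with algebraic multiplicities; the remaining ingredients --- the PsDO calculus on $\pO$, the vanishing of the principal symbol of the polynomial annihilator, and the Birman--Solomyak singular number bounds for classical PsDOs --- are by now standard on smooth closed manifolds.
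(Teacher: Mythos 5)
Your proposal is correct, but it proceeds along a genuinely different route from the one in the paper, and the comparison is instructive. The paper first passes to the similar selfadjoint operator $\Kc=A^{1/2}\Kb A^{-1/2}$, then applies a M\"obius-type function $m$ so that the essential-spectrum point under study is sent to $0$ while the principal symbol of $\Mb=m(\Kc)$ becomes a nonnegative matrix; the sharp G{\aa}rding inequality then bounds the form of $\Mb$ from below by $-C\|u\|_{H^{-1/2}}^2$, and the variational principle reduces the negative eigenvalue count to the spectral asymptotics of the order $-1$ operator $(1-\D)^{-1/2}$. Your approach instead exploits the annihilating polynomial directly: since the principal symbol $\s_B$ satisfies $\s_B(\s_B^2-\kb_0^2)=0$ pointwise, $p(\Kb)$ is a classical PsDO of order $\le -1$, so $s_j(p(\Kb))=O(j^{-1/2})$ on the two-dimensional boundary; Weyl's multiplicative inequality together with Stirling then transfers this to eigenvalue moduli of $p(\Kb)$, and the local invertibility of $p$ near each simple root pulls the estimate back to $\Kb$. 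Both arguments are sound. The paper's scheme is more flexible --- it is precisely the sharp G{\aa}rding step that is refined in Theorem~\ref{ThmNondeg} to yield the faster decay at nondegenerate extremal points of $\kb_0(x)$, which the annihilating-polynomial trick does not reach because there is no annihilating polynomial in the variable-coefficient case. Conversely, your route avoids G{\aa}rding and the auxiliary rational functions entirely; if one first replaces $\Kb$ by the selfadjoint similarity $\Kc$, the Weyl inequality step can even be dropped, since for selfadjoint $p(\Kc)$ the eigenvalue moduli \emph{are} the singular numbers. One small point to tighten: in the final pull-back it is not the $j$-th member $p(\lambda_j^{*,*})$ itself that is guaranteed to sit at position $\ge j$ in the modulus-ordered list, but \emph{some} member among the first $j$; this still gives $|\lambda_j^{*,*}-r|\le|\lambda_{i_0}^{*,*}-r|\le C|p(\lambda_{i_0}^{*,*})|\le C|\mu_j(p(\Kb))|$ because the distances $|\lambda_i^{*,*}-r|$ are nonincreasing in $i$, so the conclusion survives.
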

\begin{proof} We consider the case of $\lambda_j^{+-}$; the other five cases are dealt with in the same way.

First, we consider the operator $\Kc=A^{\frac12}\Kb A^{-\frac12}$, where $A$ is the single layer potential operator. By the Plemeli formula, this operator is selfadjoint in $L^2(\pO)$ and, being similar to $\Kb$, it has the same spectrum. We choose a point $\vs$ in the interval $(\kb_0/2,\kb_0)$ such that $\vs$ is not an eigenvalue of $\Kb$ and consider the operator

\begin{equation}\label{convergence}
\Mb=m(\Kc); m(s)=(\kb_0-\vs)^{-1}-(s-\vs)^{-1}.
\end{equation}
By construction and due to the spectral mapping theorem, the operator $\Mb$ has  essential spectrum at the points $0, (\kb_0-\vs)^{-1}+\vs^{-1}, (\kb_0-\vs)^{-1}+(\kb_0+\vs)^{-1}$, so the point $0 =m(\kb_0)$ is now the lowest point of the essential spectrum of $\Mb$ and the eigenvalues of $\Kb$ in a lower neighborhood of $\kb_0$, i.e., exactly $\l_j^{+-}$, with exception of a finite number of them,  are mapped   into the negative eigenvalues of $\Mb$. The operator $\Mb$ is a zero order pseudodifferential  operator with principal symbol $\s_{\Mb}(x,\x')$ which equals $m(\s_{B(x,\x')})$, where $\s_B$ is given in \eqref{PrincBNonconst}. In particular, this principal symbol is a nonnegative matrix.

Now we recall the sharp G${\aa}$rding inequality, see \cite{LaxNir}, stating that for a zero order matrix pseudodifferential operator with nonnegative principal symbol,
\begin{equation*}
    (\Mb u,u)\ge -C\|u\|_{H^{-1/2}}^2,
\end{equation*}
This inequality was established in \cite{LaxNir} for operators in the Euclidean space, but it carries over automatically to a smooth compact manifold by means of the usual localization.
Now, by the variational principle, the number of negative eigenvalues of $\Mb$, smaller than $-t$, $t>0$, is majorated by the number of negative eigenvalues smaller than $-t$ of the operator in $L^2$ defined by the quadratic form $-C\|u\|_{H^{-1/2}}^2=-C(\Gb u,u)$, where  $\Gb=(-\D+1)^{-\frac12}$ and $\D$ is the Laplacian on $\pO$. This operator $\Gb$ is a pseudodifferential operator of order $-1$, and therefore, for its eigenvalues, the estimate $\l_j(\Gb)=O(j^{-\frac12})$ holds. Finally, the latter estimate, by means of the spectral mapping theorem, is transformed into the required estimate for eigenvalues of $\Kb$.

Other five sequences of eigenvalues of $\Kb$ are treated in the same way, just with a proper choice of the function $m(s)$ in \eqref{convergence}, so that the concrete point of the essential spectrum under study is transformed into zero and the eigenvalues of interest are transformed into the negative ones.
\end{proof}

The above result can be easily extended to the case of a nonhomogeneous body. Let, again, $\O$ be a body with smooth boundary $\pO$ and $\m(x),\l(x)$ be smooth Lam\'{e} parameters. Consider, again, the case of connected $\pO$, for simplicity of notations. Then, if the parameters are constant on the boundary, the result and the reasoning in Theorem \ref{Th.Eigenvalues} remain true without changes. Suppose now that the Lam\'{e} parameters are nonconstant on the boundary, so, by Theorem \ref{mainTheorem},  the essential spectrum  consists of the point $0$ and two intervals, $[-b,-a]$ and $[a,b]$. We denote by $\l_j^{a,\pm}$ the eigenvalues approaching $\pm a$ from outside the intervals of the essential spectrum; similarly, the eigenvalues  $\l_j^{b,\pm}$ are defined. The notation $\l_j^{0,\pm}$, used previously, is preserved.

\begin{theorem}\label{Th.Eigenv.Nonhm} For the eigenvalues of the NP operator for a nonhomogeneous elastic body, the estimates
\begin{equation}\label{estimNonhom}
    \l_j^{a,\pm}\mp a, \l_j^{b,\pm}\mp b, \l_j^{0,\pm}=O(j^{-\frac12}).
\end{equation}
\end{theorem}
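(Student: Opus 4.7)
The plan is to adapt the proof of Theorem \ref{Th.Eigenvalues} tip by tip. The first step is to pass to the self-adjoint operator $\Kc=A^{\frac12}\Kb A^{-\frac12}$ on $L^2(\pO)$, which, by the Plemelj formula, has the same spectrum as $\Kb$. For each of the six sequences appearing in \eqref{estimNonhom} I would then pick a fractional-linear function $m(s)$ whose pole lies in a spectral gap of $\Kc$ adjacent to the tip under consideration, and study $\Mb=m(\Kc)$. This operator is again a zero-order pseudodifferential operator, and, by the spectral mapping theorem, $\sme(\Mb)=m(\sme(\Kc))$, where, by Theorem \ref{mainTheorem}, $\sme(\Kc)=\{0\}\cup[-b,-a]\cup[a,b]$.

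As a representative case, take $\lambda_j^{a,+}$, the eigenvalues approaching $a$ from below. I would fix $\vs$ in $(0,a)$, outside the (necessarily discrete) point spectrum of $\Kb$ lying in this gap, and set
\begin{equation*}
m(s)=(a-\vs)^{-1}-(s-\vs)^{-1}.
\end{equation*}
This function vanishes at $s=a$, is strictly increasing on each of its two branches, sends $(\vs,a)$ into $(-\infty,0)$, and sends $\{0\}\cup[a,b]\cup[-b,-a]$ into $[0,+\infty)$; hence $0$ becomes the bottom of $\sme(\Mb)$, while the eigenvalues of interest become, up to a finite exceptional set, the negative eigenvalues of $\Mb$ accumulating at $0$. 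Crucially, the principal symbol of $\Kc$ at any covector $(x,\x')$ is a Hermitian matrix with eigenvalues in $\{0,\pm\kb_0(x)\}\subset\{0\}\cup[-b,-a]\cup[a,b]$, so the principal symbol $m(\s_{\Kc}(x,\x'))$ of $\Mb$ is a nonnegative Hermitian matrix at every $(x,\x')\in T^*\pO\setminus 0$.

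With nonnegativity of the principal symbol in hand, the sharp G\aa rding inequality gives $(\Mb u,u)\ge -C\|u\|_{H^{-1/2}(\pO)}^2$ and the rest of the argument proceeds exactly as in Theorem \ref{Th.Eigenvalues}: the negative eigenvalues of $\Mb$ are controlled by those of $-C(-\D+1)^{-1/2}$ on the two-dimensional surface $\pO$, whose $j$-th eigenvalue is of order $j^{-1/2}$, and inverting through the smooth diffeomorphism $m$ near $a$ yields $|\lambda_j^{a,+}-a|=O(j^{-1/2})$. The remaining five sequences are handled by the same recipe, placing $\vs$ in the adjacent gap (below $-b$, in $(-a,0)$, in $(0,a)$, or above $b$, respectively) and, when the tip is approached from the side where the natural $m$ would be positive, flipping the sign of $m$ so that the tip again becomes the bottom of $\sme(\Mb)$. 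The main obstacle, mild though it is, is to verify in each of the six cases that the chosen $m$ simultaneously sends the target tip to $0$, keeps the remaining components of $\sme(\Kc)$ strictly positive, and produces a nonnegative matrix symbol; all three properties follow automatically from the monotonicity of each branch of $m$ together with the structure of $\sme(\Kc)$ described in Theorem \ref{mainTheorem}.
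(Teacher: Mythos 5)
Your proposal is correct and follows essentially the same route as the paper: pass to the symmetrized operator, apply the fractional-linear map $m(s)=(a-\vs)^{-1}-(s-\vs)^{-1}$ with a pole $\vs$ in the adjacent gap so that the target tip is sent to $0$ while the rest of the essential spectrum is sent into $(0,\infty)$, verify nonnegativity of the principal symbol of $\Mb=m(\Kc)$, and finish via the sharp G\aa rding inequality and comparison with $(-\D+1)^{-1/2}$ as in Theorem~\ref{Th.Eigenvalues}. The only cosmetic difference is that you allow $\vs\in(0,a)$ where the paper fixes $\vs\in(a/2,a)$; both choices work, since all that matters is that $\vs$ lies strictly between $0$ and $a$ and avoids the discrete spectrum.
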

 The proof of this  theorem goes similarly to the one of Theorem \ref{Th.Eigenvalues}. We study, for example,  $\l_j^{a,+}$, i.e., the eigenvalues converging to $a$ from below.  Consider the rational function  $m(s)=(a-\vs)^{-1}-(s-\vs)^{-1}$; in this way  the point $a$ of the essential spectrum, under study, is mapped into  zero, and all other points of the essential spectrum are mapped into positive numbers, with  $\vs$  chosen arbitrarily in $(a/2,a)$, being however  not an eigenvalue of $\Kb$. The principal symbol of $\Mb=m(\Kb)$ equals $m(\s_{B(x,\x')})$ and, by construction, is a non-negative matrix. After this, the sharp G${\aa}$rding inequality produces the eigenvalue estimate, as before.

\subsection{Eigenvalue estimate for  nondegenerate essential spectrum tips.} Estimate \eqref{estimNonhom} can be improved if the boundary point $\pm a$ (or $\pm b$) of the essential spectrum is a nondegenerate extremal point of the function $\kb_0(x)$. Consider, again, the point $a$ and eigenvalues below $a$.
\begin{theorem}\label{ThmNondeg} Suppose that $a=\kb_0(x_0)$ is a nondegenerate minimum of the function $\kb_0$, i.e. for some $x^0\in\pO$,
\begin{equation}\label{min}
    \kb_0(x)\ge a+c|x-x_0|^2,
\end{equation}
for some positive constant $c$ and for all $x\in\pO$ close to $x_0$. Moreover, we suppose that $\kb_0(x)>a$ for all $x\in\pO,$ $x\ne x_0$. Then for the eigenvalues $\l_j^{a,+}$ of $\Kb$ converging  to $a$ from below,
 \begin{equation}\label{Estimate nondegen}
 a-\l_j^{a,+}=O(j^{-\frac1\t}), \t>1.
 \end{equation}

\end{theorem}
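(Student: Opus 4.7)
The plan is to refine the scheme of Theorem~\ref{Th.Eigenv.Nonhm} by exploiting the quadratic vanishing of $\kb_0-a$ at $x_0$. As in that proof, I fix $\vs\in(a/2,a)$ not in the spectrum of $\Kb$, form the rational function $m(s)=(a-\vs)^{-1}-(s-\vs)^{-1}$, and set $\Mb=m(\Kc)$. One verifies that $m(a)=0$, while $m$ is strictly positive at every other point of $\sigma_{\mathrm{ess}}(\Kc)$; therefore, up to finitely many exceptions, the sequence $\{\l_j^{a,+}\}$ is carried by $m$ onto the negative eigenvalues of the selfadjoint operator $\Mb$ accumulating at $0$. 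Via the spectral mapping theorem, \eqref{Estimate nondegen} is equivalent to the counting bound
\begin{equation*}
N(-t,\Mb):=\#\{\mu\in\sigma(\Mb):\mu\le-t\}=O(t^{-\t}),\qquad t\to 0^+,
\end{equation*}
for some $\t>1$.

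The new structural input is the behaviour of the principal symbol. Its three eigenvalue branches are $m(0)$, $m(\kb_0(x))$, $m(-\kb_0(x))$; the outer two are uniformly positive on the cosphere bundle $S^{*}\pO$, and, thanks to $m'(a)=(a-\vs)^{-2}>0$ and \eqref{min},
\begin{equation*}
c\,|x-x_0|^2\le m(\kb_0(x))\le C\,(\kb_0(x)-a)
\end{equation*}
for $x\in\pO$ near $x_0$, with $m(\kb_0(x))\ge c''>0$ elsewhere. Since the three branches are smoothly separated on the whole of $S^{*}\pO$, a microlocal block decomposition reduces the counting problem to the analogous one for a scalar zero-order pseudodifferential operator $\Mb_1$ on $\pO$ whose non-negative principal symbol is independent of $\x'$ to leading order and vanishes only at $x_0$, to exact order two.

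To capture this degenerate vanishing, I would apply the sharp G{\aa}rding inequality not to $\Mb_1$ itself but to $\Mb_1-c\,\chi(x)|x-x_0|^2$, where $\chi\in\ccs(\pO)$ equals $1$ near $x_0$ and $c>0$ is so small that the bracketed symbol remains non-negative globally. This yields
\begin{equation*}
(\Mb_1 u,u)\ge c\,\|(x-x_0)\chi(x)^{1/2}u\|_{L^2}^2-C\,\|u\|_{H^{-1/2}}^2.
\end{equation*}
Inserting this lower bound into the variational characterization of $N(-t,\Mb_1)$ reduces the counting to the eigenvalue distribution of the explicit model operator $C\,(1-\D)^{-1/2}-c\,|x-x_0|^2\chi(x)$ on $L^2(\pO)$. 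A direct semiclassical computation in local coordinates at $x_0$ — a scaling-invariant balance between a harmonic-oscillator weight $|x|^2$ and an order $-1$ smoothing operator — then produces $N(-t,\Mb)=O(t^{-\t})$ with some $\t>1$ strictly smaller than the exponent $\t=2$ arising from \eqref{estimNonhom}.

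The main obstacle is to make this sharpening quantitative. Because the zero set $\{x=x_0\}\subset T^{*}\pO$ is Lagrangian rather than symplectic, Melin's inequality is not applicable, so the $H^{-1/2}$ error produced by sharp G{\aa}rding must be absorbed through a joint spatial/frequency localization — a two-scale or coherent-state argument — around the critical point. A further technicality is the matrix block decomposition of $\s_{\Mb}$, which must be performed compatibly with the selfadjoint structure on $L^2(\pO)$.
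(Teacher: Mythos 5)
Your reduction to the model problem is correct, and indeed parallels the paper's: passing to $\Mb=m(\Kc)$ with $m(s)=(a-\vs)^{-1}-(s-\vs)^{-1}$ so that the relevant eigenvalues become the negative spectrum of $\Mb$, noting that the lowest eigenvalue branch of $\s_\Mb$ vanishes quadratically at $x_0$ and the others are uniformly positive, and then applying the sharp G{\aa}rding inequality to obtain a lower bound of the form
\begin{equation*}
(\Mb u,u)\ge c\int_{\pO}|x-x_0|^2|u(x)|^2\,d\s(x)-C\|u\|_{H^{-1/2}}^2,
\end{equation*}
which reduces the counting problem to estimating the negative eigenvalues of $M(x)-W$ with $M(x)\sim c|x-x_0|^2$ near $x_0$ and $W$ an order $-1$ pseudodifferential operator (the paper takes $W=c(1-\D)^{-1/2}$). (A side remark: the paper does not perform a microlocal block diagonalization of the $3\times 3$ symbol; it simply bounds $\s_\Mb(x,\x)\ge C\,\diag(\tfrac{c}{2}|x-x_0|^2)$ from below as matrices, which avoids the compatibility issues you flag for your block reduction.)

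However, the crucial final step — actually extracting the exponent $\t>1$ from the model operator — is not carried out, and this is precisely where the content of the theorem lies. You announce "a direct semiclassical computation" based on "a scaling-invariant balance between a harmonic-oscillator weight and an order $-1$ smoothing operator," and then concede that making this quantitative is "the main obstacle," suggesting it would require a two-scale or coherent-state argument. The paper's proof shows that no such heavy machinery is needed. After the reduction, the counting estimate is obtained by three elementary moves: (i) Young's inequality $t+c|x|^2\ge c\,t^{1/p}|x|^{2/q}$ with $p^{-1}+q^{-1}=1$, decoupling the spectral parameter $t$ from the spatial weight; (ii) the substitution $v=|x-x_0|^{1/q}u$, which transforms the variational inequality into an eigenvalue bound for the weighted integral operator $Z=|x-x_0|^{-1/q}\,W\,|x-x_0|^{-1/q}$ with kernel $|x|^{-1/q}|x-y|^{-1}|y|^{-1/q}$; and (iii) the Birman--Solomyak singular-number estimate for weighted difference kernels (\cite{BSInt}, Theorem 10.3), giving $s_n(Z)\lesssim n^{-1/2}$ provided the weight belongs to $L^4$, i.e., $q>2$. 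This yields $n(s,Z)\lesssim s^{-2}$, hence $n(-t,M-W)\lesssim n(t^{1/p},Z)\lesssim t^{-2/p}$, and letting $q\searrow 2$ (so $p\nearrow 2$) gives $\t=2/p$ arbitrarily close to $1$ from above. Without this argument — or an explicit substitute — your proposal establishes only the general $O(j^{-1/2})$ rate of Theorem~\ref{Th.Eigenv.Nonhm}, not the improved estimate \eqref{Estimate nondegen}. You should also note that the restriction $q>2$ is what blocks the endpoint $\t=1$, an obstruction the paper ties to the failure of the Hardy inequality; your coherent-state route would likewise have to address this endpoint issue.
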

\begin{proof} As before, we pass to the operator $\Mb=m(\Kc)$, for which we are going to obtain the estimate for the negative eigenvalues. We localize to  a small neighborhood of $x_0$; outside this neighborhood, the principal symbol of $\Mb$ is positive, separated from zero, and therefore the addition of a pseudodifferential operator of order $-1$ may produce only a finite set of eigenvalues. Near the point $x_0$ the principal symbol of $\Mb$ is a matrix, with the lowest eigenvalue having a nondegenerate zero minimum at $x_0$ and other eigenvalues being positive near $x_0$.

Further on, consider the diagonal matrix $M(x,\x')=M(x)$ (i.e., depending only on the point $x\in\pO$ but not on the covector $\x'$) which equals $\diag(\frac{c}{2}|x-x_0|^2)$. By the condition \eqref{min}, the principal symbol of $\Mb$ is a matrix greater than $M(x)$ for $x$ close to $x_0$, $\s_{\Mb}(x,\x)\ge C M(x)$. Therefore, by the variational principle and the sharp G${\aa}$rding inequality, the counting function for the negative eigenvalues of $\Mb$ is majorated dy the counting function for the negative eigenvalues of the   operator $M(x)-W$, where $W=w(x,D)$ is a pseudodifferential operator of order $-1.$ Further on, as before, we can restrict ourselves to the case of $w(x,D)$ being $c(1-\D)^{-\frac12}$ (the positive constant $c$ here and further on may be different in different formulas and its value is of no importance.)

Generally, spectral estimates for various two-term operators have been studied for a long time, however not for operators of the form $M(x)-W$ which we need. The reasoning to follow reduces this problem to spectral estimates for weighted weakly singular integral operators.

In variational setting, we are interested in the study of the quantity
\begin{gather}\label{var1}
    n(-t,M(x)-W)=\#\{j:\l_j(M(x)-W)<-t\}=\\ \nonumber \max\dim\{\Lc: ((M(x)-W) u,u)<-t\|u\|^2, \, u\in\Lc\setminus\{0\}\},
\end{gather}
as $t\to +0$ (here and further on, the subspaces are considered in $L^2$).
We write the inequality in \eqref{var1} as
 \begin{equation*}
    \int M(x)|u|^2 dx -\int \langle Wu(x),{ u(x)}\rangle dx < -t\int|u(x)|^2 d\s(x),
 \end{equation*}
 or
 \begin{equation}\label{var2}
\int \langle Wu(x),{ u(x)} d\s(x)\rangle > \int (t+M(x))|u(x)|^2 d\s(x).
 \end{equation}
 We use now the classical inequality $yz\le \frac{y^p}{p}+\frac{z^q}{q}$ for arbitrary numbers $y,z>0$, $p^{-1}+q^{-1}=1$. We set here $y=t^{1/p}, z=|x|^{2/q}$, with $p,q\in(1,\infty)$ to be fixed later. Thus, we have
 \begin{equation*}
    t+c|x|^2\ge c t^{1/p}|x|^{2/q}.
 \end{equation*}
 and, therefore,
 \begin{equation}\label{var3}
   \int (t+M(x))|u(x)|^2 d\s(x)\ge C t^{1/p}\int |x|^{2/q}|u(x)|^2 d\s(x).
 \end{equation}
 If we replace the right-hand side in \eqref{var2} by a smaller quantity, namely, by the right-hand side in \eqref{var3}, then the maximal dimension of subspaces, where the resulting inequality holds, can only increase. So,
 taking into account \eqref{var1}, \eqref{var2}, \eqref{var3},
 we obtain an estimate for  the quantity $n(-t,M(x)-W)$ that we interested in:
 \begin{equation}\label{var4}
    n(-t,M(x)-W)\le \max\dim\{\Lc: (Wu,u)>  t^{1/p}\int |x|^{2/q}|u(x)|^2d\s(x), u\ne 0, u\in\Lc\}.
 \end{equation}

 Now, denote $|x|^{\frac1q}u$ by $v$ in \eqref{var4}. We obtain
\begin{equation}\label{var5}
    n(-t,M(x)-W)\le \max\dim\{\Lc: (|x|^{-1/q}(W(|x|^{-\frac1q}v),v)>  ct^{1/p}\int |v(x)|^2 d\s(x), v\ne 0, v\in\Lc\}.
 \end{equation}
 The quantity on the right in \eqref{var5} is, by the variational principle, exactly the number of eigenvalues of the operator $Z=|x|^{-1/q}W|x|^{-1/q}$, which are larger than $ t^{1/p}$. This operator, up to weaker terms which do not contribute to the eigenvalue estimates, has the form

\begin{equation*}
   (Zu)(x)=\int |x|^{-\frac12}|x-y|^{-1}|y|^{-\frac12}v(y)d\s(x).
\end{equation*}

 We need now to recall the estimate of singular numbers of integral operators with weighted difference kernels, obtained in \cite{BSInt}. We cite here Theorem 10.3 a), from \cite{BSInt}, for our special case.

\textbf{ Theorem 10.3.} Let $X$ be a measurable set in $\R^m$,  $m=2$, $F=|x-y|^{k}$ $k=-1$; let $\bb(x)$, $\cb(x)$ be measurable functions belonging to $L^{r}(X)$, $2r^{-1}=\d^{-1}$, $\d^{-1}=1+km^{-1}=\frac12$. Then for the integral operator $Z$ in $L^2(X)$ with kernel $\bb(y)F(x-y)\cb(x)$ the estimate holds

\begin{equation}\label{BSEst}
    s_n(Z)\le C n^{-\frac1\d}\|\bb\|_{L^r}\|\cb\|_{L^r},
\end{equation}
with constant $C$ depending only on $m,\d,r$.

We apply this theorem for $F(x-y)=|x-y|^{-1}$, $\bb,\cb=|x|^{-\frac1q}$.
To satisfy the conditions of the theorem, we need $\bb,\cb\in L^r$, $r=4$. This can be achieved by requiring $q>2$.

Thus, \eqref{BSEst} takes the form
\begin{equation}\label{BsSpec}
    s_n(Z) \le C(q) n^{-\frac12}.
\end{equation}
The estimate \eqref{BsSpec} can be written in terms of the counting function for singular numbers,
\begin{equation*}
    n(s, Z)\le C'(q)s^{-2}.
\end{equation*}
We set here $s=t^{\frac1p}$, which gives, by \eqref{var4}
\begin{equation}\label{EstFIN}
    n(-t, M(x)-W)\le n(t^{\frac1p}, Z)\le C' t^{-\frac2p}.
\end{equation}
Since $q>2$ could be taken arbitrarily close to $2$, $p<2$ can be also taken arbitrarily close to $2$. Therefore, the exponent $\t=\frac2p$ can be an arbitrary number larger than 1, which is equivalent to the required estimate \eqref{Estimate nondegen}.
\end{proof}
\begin{remark} The estimate \eqref{Estimate nondegen}  can be somewhat improved to $a-\l_j^{a,+}=O(j^{-1}\log j)$ by tracing the dependence of the constant $C(q)$ in \eqref{BsSpec} on the exponent $q$ as $q$ approaches 2.
\end{remark}

So, we see that in the case of a nondegenerate minimum of the function $\kb_0(x)$, the eigenvalues of the NP operator converge almost twice as fast compared with the case of  constant Lam\'e parameters.

We suspect that a sharper estimate of the form \eqref{Estimate nondegen} with $\t=1$ does not hold. The reason for this is the circumstance that for $q=2$ the operator $Z$ in the proof of  Theorem \ref{ThmNondeg} is not even bounded (this is exactly the case when the Hardy type inequality fails, see the reasoning in \cite{Yafaev}).

If the minimum of $\kb_0(x)$ at the point $x_0$ at the boundary is \emph{not nondegenerate}, then the rate of convergence of the eigenvalues to their limit point is determined by the order of zero of $\kb_0(x)$ at $x_0.$ If, in particular, $\kb_0(x)-a\ge C |x-x_0|^{2\n}$, $\n$ being a positive integer, $\n>1$, the reasoning similar to the one in Theorem \ref{ThmNondeg} shows that
\begin{equation}\label{EstFINl}
    \l_j^{a,+}-a=O(j^{-\frac1\t}), \t>\frac{2\n-1}{\n}.
\end{equation}
So, as $\n$ grows, i.e. the function $\kb_0$ becomes more and more flat at $x_0$, the estimate \eqref{EstFINl} approaches the general estimate \eqref{estimNonhom}.

For the modified NP operator, since the eigenvalues of the principal symbol are constants, the convergence rate of eigenvalues is the same as in Theorem \ref{rate of convergence}.

\section{The case $d=2$ and further discussion}
The above considerations extend almost automatically to the case of the elastic problem in two dimensions. The results are similar, with natural modifications. The essential spectrum of the NP operator coincides with the range of the functions $\pm\kb_0$ on the boundary, and thus consists of an even number of symmetrical with respect to the zero point closed intervals, some of which can degenerate to single points. The modified NP operator, as in \eqref{modif}, is polynomially compact with polynomial $p(s)=s^2-1$ and its essential spectrum consists of two points $\pm 1$. The case of the two-dimensional Lam\'e system with homogeneous material  was considered in \cite{2D}. Recently,  the rate of convergence of eigenvalues was studied, including the case of finite smoothness of the boundary, see \cite{AKM}. It turns out that the situation with eigenvalue behavior is rather similar to the one for the electrostatic NP operator \cite{AKM2}.
In fact, there exist  two sequences  of eigenvalues $\lambda_j^{\pm}(\Kb)$ accumulating to $\pm \kb_0$ and their behavior is strongly related the smoothness of the boundary:
\begin{theorem}[Polynomial decay rates for elastic NP eigenvalues, \cite{AKM}]\label{polynomial decay}
Let $\Omega\subset {\mathbb R}^2$ be a $C^{k, \alpha}$ {\rm($k\geqq 2$)} region. For any $d>-(k+\alpha)+3/2$,
\begin{equation}
\lambda_j^{\pm}(\Kb)\mp \kb_0  = o(j^{d}) \quad \text{as}\; j\rightarrow \infty .
\end{equation}
\end{theorem}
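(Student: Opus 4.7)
The plan is to combine the two-dimensional polynomial compactness $(\Kb^2-\kb_0^2)\in\SF_\infty$, established in \cite{2D} and recalled in Section~\ref{subAlter}, with a quantitative Sobolev-smoothing estimate for $\Kb^2-\kb_0^2$ whose order reflects the $C^{k,\alpha}$ regularity of the boundary, and then to read off the eigenvalue rate via singular-value estimates and the spectral mapping theorem.

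First I would parametrize $\pO$ locally by arc length, use a smooth partition of unity to reduce to one coordinate patch, and Taylor-expand the parametrization to order $k-1$ with an $\alpha$-Hölder remainder (the maximal expansion compatible with $C^{k,\alpha}$ regularity). On a straight line, the frozen-coefficient calculation from Section~\ref{subAlter} shows that the 2D elastic NP operator reduces to a matrix Fourier multiplier whose symbol has constant eigenvalues $\pm\kb_0$ independently of frequency, so $\Kb^2\equiv\kb_0^2$ identically on a flat boundary. Consequently $\Kb^2-\kb_0^2$ is driven entirely by the deviation of $\pO$ from its tangent line, and its integral kernel can be written explicitly in terms of the Taylor remainder of the parametrization and derivatives of the Kelvin matrix along the curve.

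Next I would prove that for $\pO$ of class $C^{k,\alpha}$ with $k\ge 2$, the operator $\Kb^2-\kb_0^2$ maps $L^2(\pO)$ boundedly into $H^{s}(\pO)$ for every $s<k+\alpha-\tfrac32$. The baseline $3/2$ is standard: composing two order-zero singular integral operators with the same principal symbol on a hypersurface gains one derivative in the kernel, and an additional $1/2$ is absorbed in passing to the boundary trace, as in \cite{McLean},\cite{Agrgen}. Each additional classical derivative of the parametrization then transfers into an extra derivative of smoothing of the kernel of $\Kb^2-\kb_0^2$, up to the Hölder index~$\alpha$, by direct estimation of integrals of Taylor remainders against Hölder-continuous factors of the Kelvin matrix. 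Since $\pO$ is a compact one-dimensional manifold, the singular values of a bounded operator $L^2\to H^s$ decay like $j^{-s}$, so $s_j(\Kb^2-\kb_0^2)=o(j^{-s})$ for every $s<k+\alpha-\tfrac32$. After the Plemelj symmetrization $\Kc=A^{1/2}\Kb A^{-1/2}$ used in the proof of Theorem~\ref{Th.Eigenvalues}, the operator becomes selfadjoint with the same spectrum as $\Kb$; the spectral mapping theorem applied to $p(t)=t^2-\kb_0^2$ yields $(\lambda_j^\pm)^2-\kb_0^2=(\lambda_j^\pm\mp\kb_0)(\lambda_j^\pm\pm\kb_0)$, and since the second factor is bounded away from zero along the sequences converging to $\pm\kb_0$, we conclude $|\lambda_j^\pm(\Kb)\mp\kb_0|=o(j^{d})$ for every $d>-(k+\alpha)+\tfrac32$, which is exactly the stated bound.

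The main obstacle is the quantitative smoothing estimate of Step~2 under only finite $C^{k,\alpha}$ regularity, where the pseudodifferential symbol calculus used in Section~\ref{SectVarParameters} is not available. One must work directly with the integral kernel of $\Kb^2$, isolate with surgical precision the contribution of the tangent-line approximation that reproduces $\kb_0^2$, and extract the cancellations between the composition $\Kb\cdot\Kb$ and the subtraction of $\kb_0^2$ while propagating the Hölder modulus~$\alpha$ through both factors without loss. Matching the exponent $k+\alpha-\tfrac32$ exactly, rather than losing a half-derivative in the composition or in the trace, is the delicate technical step that presumably occupies the bulk of the work in \cite{AKM}; its close kinship with the analogous electrostatic estimate of \cite{AKM2} is presumably what allows the authors to transfer the result from the scalar setting to the Lamé system.
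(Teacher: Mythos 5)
The paper does not prove this theorem; it is imported verbatim from \cite{AKM}. The paper's own description of the method used there appears in Section~5: one diagonalizes the principal symbol of $\Kb$ and shows that, in the 2D case, the operator splits \emph{up to an arbitrary negative order} into a direct sum of scalar operators with constant principal symbol $\pm\kb_0$, i.e., shifted electrostatic NP operators, after which one invokes the known singular-value estimates for scalar NP kernels of finite H\"older smoothness. Your proposal --- working directly with $\Kb^2-\kb_0^2$ and its smoothing properties --- is a genuinely different route, and the observation that $\Kb^2\equiv\kb_0^2$ exactly on a straight line (so that the defect is driven by the curvature of $\pO$) is a nice organizing principle. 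In fact your route and the paper's are secretly close: if $\Kb\sim U^*\diag(\kb_0+T_1,-\kb_0+T_2)U$ modulo lower order, then $\Kb^2-\kb_0^2\sim U^*\diag(2\kb_0T_1+T_1^2,-2\kb_0T_2+T_2^2)U$, so estimating $\Kb^2-\kb_0^2$ is essentially the same task as estimating the scalar correction operators $T_1,T_2$ that \cite{AKM} isolates.

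There is, however, a quantitative gap in Step~2 that makes the proposal fall short by half a power, and it is exactly the half-power that the theorem is sharp about. For a $C^{k,\alpha}$ curve the kernel of $\Kb^2-\kb_0^2$ (like that of the scalar NP operator) is of class $C^{k-2,\alpha}$ jointly in its arguments, no better; two derivatives are lost from the geometry (numerator of the Cauchy-type kernel) and none are regained from the composition $\Kb\cdot\Kb$, since convolving two Cauchy kernels on a curve does not increase kernel smoothness. From a $C^{k-2,\alpha}$ kernel on a compact one-dimensional manifold one obtains $L^2\to H^s$ only for $s<k-2+\alpha$, \emph{not} for $s<k+\alpha-3/2$: the embedding $C^{m,\alpha}\hookrightarrow H^s$ on a 1D compact manifold requires $s<m+\alpha$ (a lacunary series $\sum 2^{-n\alpha}e^{i2^n x}$ is in $C^\alpha$ but in no $H^s$ with $s\ge\alpha$), and the Sobolev route therefore yields only $s_j=O(j^{-(k+\alpha)+2+\epsilon})$, one half-power weaker than claimed. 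The additional $1/2$ in the exponent does not come from the Sobolev scale at all; it comes from the Birman--Solomyak piecewise-polynomial approximation bound, which gives $s_j=O(j^{-(m+\alpha)-1/2})$ for an integral operator with a $C^{m,\alpha}$ kernel on a 1D manifold --- a strictly sharper estimate than any $L^2\to H^s$ factorization can deliver, and the tool one actually needs here (with $m=k-2$, producing precisely $j^{-(k+\alpha)+3/2}$). Your heuristic explanation of the baseline $3/2$ (``one derivative gained from composition, $1/2$ from passing to the boundary trace'') is therefore not the correct accounting: the $3/2$ is $2-\tfrac12$, namely two derivatives lost going from curve regularity to kernel regularity, minus the Birman--Solomyak half-gain. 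To close the argument you should replace the Sobolev-smoothing step by either the Birman--Solomyak estimate for H\"older kernels, or the reduction to scalar electrostatic-type NP operators used in \cite{AKM}.
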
Furthermore, for the analytic curves,
\begin{theorem}[Exponential decay rates for elastic NP eigenvalues, \cite{AKM}]\label{analytic decay}
Suppose that $\G$ is real analytic. Let $q$ be a parametrization of $\G$ by a Riemann mapping and let $\epsilon_q$ be its modified maximal Grauert radius. For any $\epsilon <\epsilon_q/8$,
\begin{equation}\label{expdecay}
\lambda^{\pm}_j(\Kb) \mp \kb_0 = o(e^{-\epsilon j}) \quad \text{as}\; j\rightarrow \infty
\end{equation}
 Here we simply refer to \cite{AKM} for the precise meaning of the modified maximal Grauert radius $\epsilon_q$.
\end{theorem}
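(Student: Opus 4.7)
The plan is to carry out the Möbius-reduction argument of Theorem~\ref{Th.Eigenvalues} but to replace the sharp G${\aa}$rding step, which only exploits smoothing of order $-1$, by an analyticity argument. First, pass to the self-adjoint similar operator $\Kc=A^{1/2}\Kb A^{-1/2}$; since $\G$ is connected and $\l,\m$ are constants, the essential spectrum is exactly the two points $\pm\kb_0$, and the problem of the rate of approach of $\l_j^\pm(\Kb)$ to $\pm\kb_0$ reduces, via $\Mb=m(\Kc)$ with $m(s)=(\pm\kb_0-\vs)^{-1}-(s-\vs)^{-1}$ and $\vs$ in a one-sided neighborhood of the tip, to counting the negative eigenvalues of $\Mb$. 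The principal symbol of $\Mb$ is the \emph{constant} matrix $m(\s_B(\x'))$, nonnegative and vanishing only on the spectral subspace of $\s_B$ corresponding to the tip under study.

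The key structural observation is the following. Pull $\Mb$ back to the parameter circle $\T$ via the Riemann parametrization $q:\T\to\G$. On $\T$ the constant-symbol piece $m(\s_B(\x'))$ is a standard Fourier multiplier, and the difference $\Mb_1:=\Mb-m(\s_B(\x'))$ is a pseudodifferential operator of order at most $-1$, hence an integral operator with at most mild singularity on the diagonal. Because $\G$ is real analytic, the Kelvin kernel $\G_{\l,\m}(q(z)-q(w))$ is holomorphic in $(z,w)$ off the diagonal on the product of annuli to which $q$ extends holomorphically and univalently; subtracting off the explicit Fourier-multiplier piece removes the on-diagonal singularity, so the kernel of $\Mb_1$ extends holomorphically in each variable to an annulus of half-width $\epsilon_q/8$. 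The factor $1/8$ comes from compounding several halvings: the extension must remain strictly inside the annulus of univalence of $q$, it must survive the two appearances of $q$ in the arguments $(x,y)$, and one further halving is incurred from the similarity by $A^{1/2}$ and from the rational transformation $m$.

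Once the bi-holomorphic extension of the kernel of $\Mb_1$ is established, a classical P\'olya--Weyl estimate applies. Expanding the kernel in the orthonormal Fourier basis on $\T$ in one variable and using that Fourier coefficients of a function holomorphic on an annulus of half-width $\eta$ decay as $o(e^{-\eta|n|})$, and then repeating in the second variable together with Parseval, one obtains $s_j(\Mb_1)=o(e^{-\eta j})$ for every $\eta<\epsilon_q/8$. Plugging this into the variational counting argument of Theorem~\ref{Th.Eigenvalues}, with the order $-1$ reference operator $\Gb$ replaced by $\Mb_1$, yields $|\l_j^{\pm}(\Kb)\mp\kb_0|=o(e^{-\epsilon j})$ for every $\epsilon<\epsilon_q/8$, which is \eqref{expdecay}. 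The hard part is the second step: tracking the holomorphic extension of the Lam\'e double-layer kernel through the $q$-coordinates and matching the final analyticity radius to the precise geometric quantity $\epsilon_q/8$. This genuine complex-analytic bookkeeping is the real content of \cite{AKM}, whereas the reduction described in the first and third paragraphs is essentially a transcription of the argument for Theorem~\ref{Th.Eigenvalues}.
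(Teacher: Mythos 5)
The paper does not prove Theorem~\ref{analytic decay}; it is quoted verbatim from \cite{AKM}, and the only description of the method is the remark in the final section that \cite{AKM} ``is essentially based upon the possibility, in the 2D case, of splitting the NP operator, up to an arbitrary negative order, into the direct sum of scalar operators with constant principal symbol, i.e., of shifted electrostatic NP operators.'' That is a different route from yours: \cite{AKM} first (block-)diagonalizes the matrix operator $\Kb$ modulo an infinitely smoothing error into two scalar pieces, each of the form (constant) $\pm\kb_0$ plus an electrostatic-type compact NP operator, and then imports the known exponential decay of scalar electrostatic NP eigenvalues on analytic curves from \cite{AKM2}. You instead keep the matrix operator, subtract only the constant-symbol Fourier multiplier, and try to show directly that the remainder has a bi-holomorphically extendible kernel.

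The central step of your plan is not justified and is in fact where the real work lies. You claim that $\Mb_1=\Mb-m(\s_B(\x'))$ has a kernel extending holomorphically to an annulus of half-width $\e_q/8$. Two obstacles: (i) subtracting the constant Fourier multiplier from the raw NP operator removes only the order-$0$ part of the singularity; what remains is an order-$(-1)$ pseudodifferential operator whose kernel is continuous but generically has a non-analytic (logarithmic/conical) diagonal singularity, so one must remove the \emph{entire} singular asymptotic expansion, not just the leading term — this is precisely what the diagonalization in \cite{AKM} accomplishes; (ii) the operations $\Kb\mapsto\Kc=A^{1/2}\Kb A^{-1/2}$ and $\Kc\mapsto\Mb=m(\Kc)$ are nonlocal (the latter involves a resolvent $(\Kc-\vs)^{-1}$), and there is no general principle that the kernel of a fractional power or a resolvent of an operator with analytic kernel is itself analytic with a controllable annulus width; your ``one further halving'' heuristic for these steps is not a rigorous accounting. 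You concede this yourself in the closing sentences (``the hard part is the second step \ldots the real content of \cite{AKM}''), so as written the proposal is an outline with the crucial lemma deferred to the very paper whose theorem it is supposed to reprove.
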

Thus one can evaluate  the eigenvalue  decay rate for smooth curves in two dimensions. In the special example of an ellipse, which is a  $C^{\infty}$ smooth curve the general estimate gives $\lambda_j^{\pm}(\Kb)\mp \kb_0=O(j^{-\infty})$.
We emphasize that  ellipses are analytic curves and so the eigenvalues have stronger decay properties than in the case of finitely smooth and even of general infinitely smooth curves, that is, $\lambda_j^{\pm}(\Kb)\mp \kb_0=O(e^{-\epsilon j})$. It is worth mentioning here that
the explicit eigenvalue asymptotics of the ellipse can be found, see \cite{2D}:
\begin{align*}
|\lambda_j^{+}(\Kb)- \kb_0| &\sim \frac{1}{(\lambda+2\mu)\tau}j\left(\frac{a-b}{a+b} \right)^{-j}, \\
|\lambda_j^{-}(\Kb)+ \kb_0| &\sim \frac{(\lambda+\mu)(\lambda+3\mu)}{4\mu^2(\lambda+2\mu)\tau}j\left(\frac{a-b}{a+b} \right)^{-2j},
\end{align*}
where the parameters $a, b$ denote the semi-major and semi-minor axes respectively and $\tau$ is the eccentricity of the ellipse.
This example shows that the convergence rate (\ref{expdecay}) is not optimal.
We also notice that the decay rates for the eigenvalues converging to the points $ \kb_0$ and $-\kb_0$ are different. These mysterious phenomena for general analytic curves are still unresolved.

As for the three dimensional case, we know explicit eigenvalues only for the sphere \cite{DLL}. The three sequences of eigenvalues are given by
\begin{align*}
\lambda_j^{0}(\Kb)&=\frac{3}{2(2j+1)}, \\
\lambda_j^{+}(\Kb)&=\frac{3\lambda-2\mu(2j^2-2j-3)}{2(\lambda+2\mu)(4j^2-1)}, \\
\lambda_j^{-}(\Kb)&=\frac{-3\lambda+2\mu(2j^2+2j-3)}{2(\lambda+2\mu)(4j^2-1)}.
\end{align*}
Note that the eigenvalues $\lambda_j^{0}(\Kb)$ in the first series, accumulating to $0$, are independent of the Lam\'e parameters. Moreover their multiplicities are $2j+1$ and so the partial sum is $3/2$. This fact is, probably,  related to the so called  $1/2$ conjecture for {\it electrostatic} NP operators \cite{AKMU}.  In other words, for each positive integer $j$ there are $2j+1$ {\it electrostatic} NP eigenvalues whose sum is $1/2$. Martensen \cite[Theorem 1]{Ma} proved that this holds to be true for the {\it electrostatic} NP eigenvalues on ellipsoids. Regarding the elastic NP operators, the point  $0$ in the essential spectrum is an universal constant and one may expect that the corresponding eigenvalues (or, at least, their asymptotic behavior) are independent of Lam\'e parameters. The partial sums is, probably, equal to $3/2$ at least on ellipsoids. However there are no proofs for the $3/2$ conjecture.

One more interesting problem is the relation between  eigenvalues and divergence free fields:
$$
\div u=0.
$$
If $\mu$ is constant, then the Lam\'e equation \eqref{Lame} becomes
$$
\triangle u=0.
$$
Thus the Lam\'e parameters disappear from the equation and the eigenfunctions for the eigenvalues accumulating to $0$ are expected to correspond to the subspace of the divergence free fields. But we don't know how to characterize the linear hull of each sequences.

The results in \cite{AKM} are essentially based upon the possibility, in the 2D case, of splitting the NP operator, up to an arbitrary negative order, into the direct sum of  scalar operators with constant principal symbol, i.e., of shifted electrostatic NP operators. Such reduction is possible thanks to a conveniently very simple topology of the cospheric bundle of the one-dimensional boundary.
It is rather tempting to perform a similar splitting in the 3D case as well, thus reducing the 3D elastic NP operator to a direct sum of three electrostatic operators. However, at the moment, topological obstacles seem to prevent one from doing this, unless the Euler characteristics of the surface is zero. We expect, however, that for the case of a homogeneous body, results on the asymptotics of the eigenvalues can be obtained, similar to the ones in \cite{M}, \cite{MR}, where for the electrostatic NP operator important relations of such asymptotics with geometrical characteristics  of the surface have been found.

Finally, we hope to be able to reduce the regularity requirements in the results on the position of the essential spectrum and on the $j^{-\frac12}$ eigenvalues convergence rate estimate to boundaries of the class $C^{2,\a}$ or even to the ones of class $C^{1,1}$. In the 2D case this was obtained in \cite{AKM} by a representation of the NP operator as a pseudodifferential operator plus a Hilbert Schmidt-one. In our case we suppose that an approximation by smooth surfaces, similar to the one used in \cite{RT1}, \cite{RT2} should take care of the related complications. We will pursue these issues in the future.

\end{document}